\numberwithin{equation}{section}
\definecolor{purple}{rgb}{0.9,0,0.8}
\definecolor{gray}{rgb}{0.7,0.7,0.7}
\newtheorem{theorem}{Theorem}[section]
\newtheorem{lemma}[theorem]{Lemma}
\newtheorem{proposition}[theorem]{Proposition}
\theoremstyle{definition}
\newtheorem{definition}[theorem]{Definition}
\newtheorem{remark}[theorem]{Remark}
\newcommand{\beq}{\begin{equation}}
\newcommand{\eeq}{\end{equation}}
\newcommand{\N}{\mathbb{N}}
\newcommand{\R}{\mathbb{R}}
\newcommand{\Z}{\mathbb{Z}}
\DeclareMathOperator{\argmin}{arg\,min}
\renewcommand{\emptyset}{\varnothing}
\renewcommand{\setminus}{\backslash}
\newcommand{\CAL}[1]{\mathcal{#1}}  
\newcommand{\BB}[1]{\mathbb{#1}}
\def\mathcolor#1#{\@mathcolor{#1}}
\def\@mathcolor#1#2#3{%
	\protect\leavevmode
	\begingroup
	\color#1{#2}#3%
	\endgroup
}
\newcommand{\blanco}[1]{\mathcolor{white}{#1}}
\def\cle{\preccurlyeq}
\def\nn{\nonumber}
\title[Brunet-Derrida particle systems]{Rank Dependent Branching-Selection Particle Systems}
\author[P. Groisman]{Pablo Groisman}
\address{Universidad de Buenos Aires, Facultad de Ciencias Exactas y Naturales and IMAS-CONICET, Argentina; NYU-ECNU Institute of Mathematical Sciences at NYU Shanghai.}
\email{pgroisma@dm.uba.ar}
\author[N. Soprano-Loto]{Nahuel Soprano-Loto}
\address{Universidad de Buenos Aires, Facultad de Ingenier\'ia,  Argentina.}
\email{nsoprano@fi.uba.ar}
\begin{document}


\begin{abstract}
We consider a large family of branching-selection particle systems. The branching rate of each particle depends on its rank and is given by a function $b$ defined on the unit interval. There is also a killing measure $D$ supported on the unit interval as well. At branching times, a particle is chosen among all particles to the left of the branching one by sampling its rank according to $D$. The measure $D$ is allowed to have total mass less than one, which corresponds to a positive probability of no killing. Between branching times, particles perform independent Brownian Motions in the real line. This setting includes several well known models like Branching Brownian Motion (BBM), $N$-BBM, rank dependent BBM, and many others. We conjecture a scaling limit for this class of processes and prove such a limit for a related class of branching-selection particle system. This family is rich enough to allow us to use the behavior of solutions of the limiting equation to prove the asymptotic velocity of the rightmost particle under minimal conditions on $b$ and $D$. The behavior turns out to be universal and depends only on $b(1)$ and the total mass of $D$. If the total mass is one, the number of particles in the system $N$ is conserved and the velocities $v_N$ converge to $\sqrt{2 b(1)}$. When the total mass of $D$ is less than one, the number of particles in the system grows up in time exponentially fast and the asymptotic velocity of the rightmost one is $\sqrt{2 b(1)}$ independently of the number of initial particles.
\end{abstract}

\maketitle

\section{Introduction}

Branching-selection particle systems have been widely studied for a long time. They are useful to model the evolution of a population under selection mechanisms but also in chemistry, physics and other branches of biology since they are good microscopic versions for phenomena that at a large scale show the propagation of a front between a stable and an unstable state. This is a common situation in all these disciplines and many others.

Since the seminal paper by Brunet and Derrida \cite{BD2}, many models have been introduced to describe and understand the differences between microscopic and macroscopic models through heuristic arguments, numerical simulations and rigorous proofs \cite{BD,BD2,BDMM,BDMM2,S,BBP,Mai13,Mai16,DR,DFPS}. 

Several properties of the system at the microscopic level have been conjectured -and sometimes also proved- to be universal among theses models, like the shift in the velocity of the front and the asymptotic expansion of the rate of convergence of the microscopic velocities to the macroscopic one.

In this article we introduce a family of models that can be considered to belong to the Brunet-Derrida class
and contain some well known models as particular instances. We prove the existence of an asymptotic velocity for all of them and the convergence of these velocities to the universal constant $\sqrt 2$ as the number of particles increases to infinity. The main tool is a rigorous proof of the scaling limit of suitable processes to an F-KPP type equation. 
The strategy of using the hydrodynamic limit to get information about the particle system have been widely used in different contexts to understand random walks and particle systems \cite{ABP, GJM, GQ, BCDM+86}.

The novelty here is that although we are not able to prove the scaling limit for all the instances of the model, the class of processes for which we are able to prove it is rich enough to allow us to show the convergence of the velocities in all the cases. In addition, we provide heuristic arguments to conjecture the hydrodynamic equation for any choice of $b$ and $D$. The hydrodynamic limit equation encodes both F-KPP type equations as well as free-boundary problems like the ones in \cite{GJM,DFPS,BBP} as particular cases, but also many others, including the possibility of non local terms.

\subsection{\texorpdfstring{$(b,D)$}{(b,D)}-Branching Brownian Motion} 
We first describe the model in words. The parameters are a {\em birth function} $b\colon[0,1] \to [0,\infty)$ 
and a {\em death probability measure} supported on $\mathbb I:=\{-\infty\}\cup[0,1)$ defined through its cumulative distribution function $D\colon \mathbb R \to [0,1]$. 
For simplicity, we assume $b(1)=1$.
The evolution is given by a continuous-time Markov process that performs independent Brownian Motions on the real line except at branching times, at which particles can branch into two (reproduction) and can also be eliminated from the system (selection mechanism). 
At time $ t=0 $ we start with a deterministic number of particles $ N $
whose positions in the line may be given by any distribution on $ \mathbb R^N $.
Let $N_t$ be the number of particles in the system at time $t$. 
We use $ X^N_t(1),\ldots,X^N_t(N_t) $ to denote their positions at that time. 
For $j=1, \dots, N_t$, the particle with quantile $j$ branches into two particles at rate $ b\big( \frac{j-1}{N_t-1} \big)$. 
Hereafter we abuse a little bit and use the word quantile as a synonym for order statistic.
At the time a particle with quantile $j$ branches, the particle with quantile $ i\in \{ 1,\ldots,j-1 \}$  is killed with probability $ D(\frac{i}{j-1}-) - D(\frac{i-1}{j-1}-) $. Observe that the number of particles $N_t$ is constant if $ D(-\infty)=0 $, but there is a positive probability of no killing at a branching time if $D(-\infty)>0$. In that case the number of particles in the system increases exponentially fast.
The notations $ D(x-)$ and $ D(-\infty)$ stand for $\lim_{y\uparrow x}D(y) $ and $\lim_{y\to -\infty}D(y) $ respectively.
A graphical construction of the $ (b,D) $-BBM is provided in the course of the proof of Proposition \ref{thm_comparison}.

We will first discuss the relevance of this model and we will compare it with well known processes in the Brunet-Derrida class that have been previously studied, some of which can be obtained as particular instances for adequate choices of $b$ and $D$. Then, we review the main properties of the $(b,D)$-BBM when the number of particles is conserved, $D(-\infty)=0$. This is in the spirit of \cite{DR,DM,GJM} and there is no new ideas here. The important fact is that the process as seen from the tip is ergodic and that this implies the existence of an asymptotic velocity $v_N > 0$ for the cloud of particles,
\[
\lim_{t\to\infty} {t^{-1}}\displaystyle\max_{1\le i \le N} X^N_t(i) = v_N.
\]
Afterwards we study the scaling limit of the process as the number of initial particles $N$ goes to infinity. As a byproduct, we obtain the convergence of the velocities. 

A proof of the scaling limit for general $b$ and $D$ is out of the scope of this paper. To get an idea of the level of difficulty of the problem, it is worth to note that while for absolutely continuous (with respect to Lebesgue) measures $D$ we expect a nice reaction-diffusion equation, as in \cite{GJM}, while a free-boundary is expected to be involved in the formulation of the hydrodynamic equation when $D$ has an atom at zero, as in $N$-BBM \cite{DFPS,BBP}. The main obstacle is the lack of a proof of propagation of chaos for such general $b$ and $D$, but we will see that we can obtain nice bounds for the two-particle correlations for a {large class of processes that are related to any $(b,D)$-BBM}. Once this is obtained, the control of the variance of the empirical measures follows readily and with the help of proper comparison principles, we can get our result. 


This leads us to the following result.

\begin{theorem}\label{thm_main}
	Let $ X^N=\{X^N_t:t\ge 0\} $ be the $ (b,D) $-BBM  with arbitrary random initial condition $ X^N_0\in\mathbb R^N $.
	Suppose that there exists $ k\in\mathbb N $
	such that $ x^k\le b(x)\le 1 $ and $ x^k\le D(x) $ for every $ x\in [0,1] $.
	\begin{enumerate}
		\item If $ D(-\infty) =0$,
		$ X^N $ has a deterministic asymptotic velocity that depends only on the number of particles $ N $. There exists $ v_N > 0$ such that 
		\begin{align}\nonumber
		\lim_{t\to\infty} {t^{-1}}\displaystyle\max_{1\le i \le N} X^N_t(i) = v_N \quad\mbox{a.s and in $ L^1 $}.
		\end{align}
		Furthermore, 
		\[
		\lim_{N\to\infty}v_N=\sqrt 2.
		\]
		\item If $ D(-\infty)>0 $,
		the asymptotic velocity of  $ X^N $ is $ \sqrt 2 $ for every $N$,
		\begin{align}\nonumber
		\lim_{t\to\infty} {t^{-1}}\displaystyle\max_{1\le i \le N} X^N_t(i) = \sqrt{2} \quad\mbox{a.s and in $ L^1 $}.
		\end{align}
	\end{enumerate}
\end{theorem}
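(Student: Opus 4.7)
The plan is to split the proof into three parts: existence of $v_N$ in Case (1), a uniform upper bound $\le \sqrt{2}$ in both cases, and a matching lower bound via the hydrodynamic/scaling limit of a tractable auxiliary process.

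\emph{Existence of $v_N$ (Case 1).} Since $D(-\infty)=0$ the total population is conserved, and I would realize the dynamics ``seen from the tip,'' $Y^N_t(i):=X^N_t(i)-\max_j X^N_t(j)$, as a Markov process on $\{y\in\R^N_{\le 0}:\max_i y_i=0\}$. The hypotheses $b(x)\ge x^k$ and $D(x)\ge x^k$ yield irreducibility (every quantile branches and is subject to killing) and positive recurrence (rank-dependent selection keeps the cloud tight), so $Y^N$ admits a unique invariant law. Writing $\max_i X^N_t(i)$ as the time integral of its drift along the trajectory of $Y^N$ and applying Birkhoff's ergodic theorem under the stationary measure yields $v_N:=\lim t^{-1}\max_i X^N_t(i)$ a.s.\ and in $L^1$, with $v_N>0$ because $b(1)=1$. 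This is the scheme of \cite{DR,DM,GJM}.

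\emph{Upper bound.} I would couple $X^N$ with an unkilled branching Brownian motion $Z$ of branching rate $1$ launched from the same $N$ initial positions and driven by the same Brownian increments. Since $b\le 1$, the branching clocks of $X^N$ can be obtained as a thinning of those of $Z$, while killings of $X^N$ simply remove points that $Z$ retains; hence the $X^N$-particle set is at every time a sub-multiset of the $Z$-particle set, so $\max_i X^N_t(i)\le \max Z_t$. The classical limit $t^{-1}\max Z_t\to\sqrt 2$ gives $v_N\le\sqrt 2$ in Case (1) and the analogous upper bound in Case (2).

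\emph{Lower bound.} For Case (1) the strategy is to dominate $X^N$ from below by an auxiliary $(\bar b,\bar D)$-BBM that lies in the subclass for which the paper establishes a hydrodynamic limit; the hypotheses $x^k\le b(x)$ and $x^k\le D(x)$ are imposed precisely to leave enough room to insert such a comparison process. The hydrodynamic limit of the auxiliary process is an F-KPP type reaction-diffusion (or free-boundary) equation whose linearized reaction at the leading edge is $b(1)u=u$, so its traveling front moves at the KPP speed $\sqrt{2b(1)}=\sqrt 2$. Transferring this speed to $\max_i X^N_t(i)$ via tightness of the empirical measure together with the comparison of maxima yields $\liminf_{N\to\infty} v_N\ge \sqrt 2$. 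For Case (2) the lower bound for \emph{every} $N$ exploits $D(-\infty)>0$ and $b(1)=1$: at each branching of the current rightmost particle---which occurs at rate $1$ and kills no offspring of the branching event itself---two copies appear at the tip, and since a particle of rank $N_t$ is never killed, iterating this identifies a rate-$1$ BBM sub-tree at the leading edge whose maximum moves at $\sqrt 2$, providing the lower bound.

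\emph{Main obstacle.} The delicate step is the lower bound in Case (1): constructing an auxiliary $(\bar b,\bar D)$-BBM that simultaneously (i) is rigorously dominated by $X^N$ in a sense strong enough to control the leading edge as $N\to\infty$, and (ii) satisfies the paper's own propagation-of-chaos and scaling-limit theorem. Monotonicity in $b$ (higher branching only speeds the front) and in $D$ (more left-biased killing pushes the empirical measure rightward) supply the graphical ingredients, but matching them to the rank-dependent structure of both parameters, and then extracting the microscopic velocity from the macroscopic equation, is where the real work lies.
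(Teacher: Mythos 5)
Your decomposition (existence of $v_N$, upper bound via coupling with BBM, lower bound via comparison with a tractable process whose hydrodynamics are known) matches the paper's outline, and your treatment of the existence of $v_N$ and of the upper bound is essentially the paper's argument. There are, however, two substantive gaps in the lower bound.

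\emph{Case (1), lower bound.} The hydrodynamic limit of the auxiliary process gives uniform convergence of the empirical distribution function, but this by itself does not control the position of the rightmost particle, which sits in the tail where the limiting density vanishes; ``tightness of the empirical measure together with comparison of maxima'' is not enough to bridge that gap. The paper's actual mechanism (Section \ref{sctn_limiting_velocity_kN}) is more delicate: for the $(N,p)$-BBM $Y^N$ one writes the velocity $w_N$ as the time-derivative of the expected empirical mean when the process seen from its leftmost particle runs in its unique invariant measure (Propositions \ref{kilo} and \ref{panic}), re-expresses this via Lemma \ref{thm_finite_hydro} as $\mathbb E_\zeta\big[\int h_p^N(F^N(t,x))\,\mathrm d x\big]$, uses monotonicity of spacings (Proposition \ref{trikonasana}) to bound it from below by starting all particles at the origin, and only then invokes the hydrodynamic limit together with the F-KPP traveling-wave properties \eqref{coronavirus}--\eqref{kpp_property_1}. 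This chain of steps, adapted from \cite{BCDM+86,GJM}, is precisely the missing content of your ``Main obstacle'' paragraph. Note also that the auxiliary process is not another $(b,D)$-BBM but an $(N,p)$-BBM with $p(i,j)=\mathbbm 1\{i=k-1,\,j=k\}$, $\lambda=1/k$; the comparison $Y^N\le_{st}X^N$ goes through Proposition \ref{bailey}, which realizes $Y^N$ as a $(\hat b,\hat{\mathbf D})$-BBM so that Proposition \ref{thm_comparison} applies.

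\emph{Case (2), lower bound.} Your ``rate-$1$ BBM sub-tree at the leading edge'' does not exist. After the rightmost particle branches, only one of the two copies remains the rightmost; the other now has rank $N_t-1$ or lower and \emph{can} be killed at subsequent branching events of the new rightmost particle. So the descendants of a tip-branching are not immune to selection and there is no embedded unkilled BBM at the leading edge. The paper instead exploits that when $D(-\infty)>0$ the population $N_t$ grows to infinity: for any $\hat N$ the stopping time $\tau=\inf\{t:N_t=\hat N\}$ is a.s.\ finite, the strong Markov property at $\tau$ reduces the $\liminf$ of $t^{-1}\max_i X^N_t(i)$ to that of a $(b,D)$-BBM started from $\hat N$ particles, and Case (1)'s comparison gives the lower bound $w_{\hat N}$; letting $\hat N\to\infty$ and applying Proposition \ref{uttanasana} then yields $\sqrt2$.
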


\begin{remark}
	Since $ \lim_{k\to\infty}x^k=0 $ for $ x\in [0,1) $, the existence of such an integer $ k $ is  a mild requirement.
	The assumption $ b(1)=1 $ is imposed just to normalize and can easily be removed. In that case we need to assume $ b(1)x^k\le b(x) $ instead and we get the asymptotic velocity $ \sqrt{2b(1)} $.
	
\end{remark}

\begin{remark}
	In \cite{BD2}, based on numerical simulations and heuristic arguments, E. Brunet and B. Derrida conjectured the unexpected slow rate of order $(\log N)^{-2}$ for the convergence of $v_N$ to $\sqrt 2$ in (1) for a related model of $N$ interacting particles in $\Z$ that evolve at discrete times and suggested that this behavior should be universal. The conjectured universality comes from the fact that the behavior of $\sqrt 2 - v_N$ can be understood by introducing a cut-off to the scaling limit of the system. Remarkably, this order of convergence has been proved rigorously for some models \cite{BG10,P}. We expect the same order of convergence in our setting at least for a large class of parameters $b$ and $D$, but the exact class of pairs $(b,D)$ for which this order of convergence holds is a delicate question. Answering this question should involve a careful study of the solutions of equation \eqref{hydro.limit} below with a cut-off. This research has not been carried out yet.
\end{remark}

\section{Relevance of the Model and Related Work}
\label{relevance}
The $(b,D)$-BBM is a natural model for the evolution of a genetic trait in the presence of selection and similar phenomena. In fact, it certainly fits in the spirit of all the models introduced by Brunet, Derrida and coauthors in their seminal papers \cite{BD, BD2, BDMM, BDMM2}. We will see now that several models that have been studied in the literature can be obtained as particular cases of the $(b,D)$-BBM for adequate choices of $b$ and $D$.

Before going into that, it is worth mentioning that systems of diffusing particles interacting through their ranks have also attracted the attention of scientists in probability, finances and many other areas \cite{CDSS,DT, DJ, IKS}, and it is known that several common features appear in this type of systems.

Also, branching-selection particle systems in which the killing rates depend on a fitness function have been studied \cite{Beckman,BZ}, and precise information on their behavior have been proved. 
In these models the fitness function depends on the absolute position of the particles rather than its relative one. In \cite{Beckman} a branching rate depending on the position and the empirical measure is considered and the hydrodynamic limit is obtained, but that setting is different to ours and also the technique. Finally, the Brownian Bees model have been considered recently in \cite{BBNP,ABLT}. In this model particles perform independent Brownian Motions in $\R^d$ and branch at rate one. At branching events the particle which is the furthest away from the origin is removed. In \cite{ABLT} in fact the killed particle is the one which is furthest from the barycenter instead of the origin and an invariance principle is obtained for every fixed number of particles under diffusive scaling,
while in \cite{BBNP} the hydrodynamic limit is obtained as $N\to\infty$ for i.i.d. initial conditions and, remarkably, also for the cloud of particles in equilibrium.

We list below the announced particular cases of the $ (b,D) $-BBM.
We make a slight abuse of notation by allowing $ D $ to denote also the probability measure supported on $ \BB I $ having $ D $ as cumulative distribution function when this does not lead to confusion.
\begin{enumerate}
	\item
	\label{BBM}   Taking $ b\equiv 1 $ and $ D=\delta_{-\infty} $,
	we get the BBM.
	\item \label{N-BBM}  The case $ b=\mathbbm{1}_{ (0,1] }$ and $ D=\delta_0 $ results in the $N$-BBM \cite{Mai13,Mai16,DFPS}.
	\item \label{fittest}
	Taking
	$b(s)=s$ and $ D=\text{Unif}([0,1]) $,
	we recover the model introduced in \cite{GJM}, in which particles diffuse as independent Brownian Motions. In addition every pair of particles is chosen at a constant rate $\frac{1}{N-1}$ and the leftmost one (among the chosen particles) jumps on top of the rightmost one. In fact,
	the $j$-th quantile belongs to $(j-1)$ pairs in which a particle will jump on top of it, so it branches at rate $b\big(\frac{j-1}{N-1}\big) = \frac{j-1}{N-1} $.
	On the other hand, conditioning on the event that a particle with quantile $j$ branches, the probability of a particle with quantile $i<j$ being part of the pair is uniform over the set $ \{ 1,\ldots,j-1 \} $. Namely, the probability is $ D\big( \frac{j}{i-1}\big) - D\big(\frac{j-1}{i-1} \big) =\frac{1}{i-1}$.
	\item   $b(s)\equiv 1$ and $D=\text{Unif}([0,\varepsilon])$ leads to interesting models as well. On the one hand they can be seen as smooth approximations of $N$-BBM, in the sense that the hydrodynamic equation has no free-boundaries and is just an F-KPP type equation. On the other hand, if we allow $\varepsilon$ to be random (which is not considered in this article), we get slight modifications of the very well known models of BBM with absorption \cite{S, BLS, GHS,AHZ}
	by taking $\varepsilon$ equal to the proportion of particles below some barrier,
	and we get a variant of the $L$-BBM model considered in \cite{BDMM,P}
	for $\varepsilon$ equal to the proportion of particles whose distance to the rightmost one is larger than $L$.
	\item  
	$b\equiv 1$ and $D(s)=1- ks(1-s)^{k-1} - (1-s)^k$ for $0\le s \le 1$ gives another smooth approximation of $N$-BBM as $k\to \infty$. These approximations have been considered in \cite{BBP} at the level of the hydrodynamic equation to prove the existence of solution of the free-boundary problem obtained as the scaling limit of $N$-BBM.
	We will discuss this example again just before Section \ref{sec_scaling_limit}.
	\item 
	If we take
	$b(s)=\mathbbm{1}\{s>\frac12\} $, or any other piecewise constant function, and any choice for $D$, we recover the model proposed in \cite{Beckman} replacing the mean by the median. Motivated by this model the author studies the hydrodynamic limit of a related BBM with branching rates depending on the position of the particle and the empirical measure in a specific way.  
\end{enumerate}

Let $F^{N}$ be the distribution function of the empirical measure of the particles\break$ X^N_t(1),\ldots,X^N_t(N_t)$, normalized by $N$,
\[
F^N(t,x)= \frac{1}{N}\sum_{i=1}^{N_t} \mathbbm{1}\{X^N_t(i) \le x\}.
\]
For $ t>0 $ fixed, and assuming the convergence of the initial conditions,
$F^N(t,\cdot)$ is expected to converge, as $ N\to\infty $, to a cumulative distribution function $U(t,\cdot)$ with density $u(t,\cdot)$ and tail distribution $V(t,\cdot)=1-U(t,\cdot)$.
We describe below these scaling limits in some of the situations already mentioned.

In case \eqref{BBM}, we get the heat equation with a source
\[
\partial_t u = \frac12 \partial_{xx}u + u,
\]
and, by linearity, the same equation for $U$. 

In case \eqref{N-BBM}, a free-boundary problem is obtained: find $(u,L)$ such that
\begin{align*}
\partial_t u  = \frac{1}{2} \partial_{xx}u + u, &\quad t>0, x\in (L_t, \infty)
\\
u(t,x) =0, &\quad t>0, x\in (-\infty,L_t)
&
\\
\quad \int_{L_t}^\infty u (t, x) \, dx = 1,
&\quad t>0,
\end{align*}
see \cite{GJ, DFPS, L, BBP, DR}.
Equivalently, integrating with respect to the spatial variable, the following equation was obtaind in \cite{BBP} for the tail distribution: find $(V,L)$ such that
\begin{align}
\nonumber
\partial_t V  = \frac{1}{2} \partial_{xx}V + V, &\quad t>0, x\in (L_t, +\infty)\\
\label{FreeBoundary.Tail.2} V(t,x) =1, &\quad t>0, x\in (-\infty,L_t),\\
\nonumber \partial_x V(t,L_t) =0,& \quad t>0.
\end{align}

For the case \eqref{fittest}, in \cite{GJM} the F-KPP was obtained for $U$:
\[
\partial_t U = \frac{1}{2} \partial_{xx}U - U(1-U).
\]
Differentiating with respect to the spatial variable readily gives the equation for the density $u$.

We end the section by discussing in a heuristic manner the most general form of the scaling limit.
For the $(b,D)$-BBM  ---that contains all the previous cases--- we expect,
when $ D $ has density $ d $,
the hydrodynamic equation to have the form
\begin{equation*}
\partial_t u  =\frac{1}{2}\partial_{xx} u+b({U})u-u\left[\int_{U}^1  b(r)\frac{1}{r}d \left({\frac{U}{r}}\right) {\mathrm dr}\right],  \quad  t>0,x\in \R.
\end{equation*}
Here both $u$ and $U$ are evaluated at $(t,x)$.
This equation has the following interpretation in terms of the rate at which particles are being created/eliminated at each position $x\in \R$: 
the first term corresponds to the diffusion of the particles; 
the second one follows since a particle at position $x$ branches at rate $b(U(x))$;
finally, to explain the third one, we observe that for a particle at position $x$ being eliminated we need, on the one hand, a particle to its right (higher quantile) to branch and, on the other hand, 
the involved particle to be chosen to die, 
this last choice being made through the measure $D$ rescaled to $[0,r]$ when the branching particle is the $r$-th quantile. 
By changing variables, we obtain the following formulation that does not require $D$ to have a density:
\begin{equation}
\label{hydro.limit}
\partial_t u  =\frac{1}{2}\partial_{xx} u+b({U})u-u\left[\int_{U}^1  b\left(\frac{U}{r}\right)\frac{1}{r} {D( \mathrm d r)}\right],  \quad t>0, x\in \R.
\end{equation}
Integrating on both sides with respect to the spatial variable, we get the equation
\begin{equation*}
\partial_t V  =\frac{1}{2}\partial_{xx} V+B({V})-G(V),  \quad t>0, x\in \R
\end{equation*}
for the tail distribution $V=1-U$,
being 
\[
B(z)=\int_{1-z}^1 b(s) \mathrm{d} s\quad\text{and}\quad G(z)= \int_{1-z}^1 \int_{s}^1  b\left(\frac{s}{r}\right)\frac{1}{r} {D( \mathrm d r)} \mathrm{d} s.
\]
Taking $b = \mathbbm{1}_{(0,1]}$ and $D=\delta_0$, we get $G(z)=\mathbbm{1}\{z=1\}$.
To see why this is true, consider the approximation $ \delta_0(\mathrm{d} r)\approx h^{-1}\mathbbm{1}\{ 0\le r\le h \}\mathrm{d} r $ as $ h\approx 0 $.
The integral
\begin{align}
\int_{1-z}^1 \int_{s}^1  \frac{1}{r}h^{-1} \mathbbm{1}\{0\le r\le h\} \mathrm{d} r\mathrm{d} s
\end{align}
vanishes when $ h $ is small enough if $ z<1 $,
and is constantly $ 1 $ if $ z=1 $.
Hence this choice of $ b $ and $ D $ results in
\begin{equation*}
\partial_t V  =\frac{1}{2}\partial_{xx} V+ V - \mathbbm{1}\{V=1\},  \quad t>0, x\in \R.
\end{equation*}
This equation that has the same weak formulation than equations \eqref{FreeBoundary.Tail.2}.
%
Also in \cite{BBP}, the authors obtain the solution to that problem as the limit as $k\to \infty$ of the solutions to problem
\[
\partial_t V_k  =\frac{1}{2}\partial_{xx} V_k+ V_k - V_k^k, \qquad \text{ for } x\in \R \text{ and } t>0, 
\]
which corresponds to taking $b\equiv 1$ and
$D(r) = D_k(r) = 1-(1-r)^{k-1}[(k-1)r+1] $
for $r\in [0,1]$
(that gives $\frac{1}{r}D_k'(r) = \frac{1}{r}d_k(r)  = k(k-1)(1-r)^{k-2}$). 
Then, the family of $(b,D)$-BBMs also contains a sequence of processes with parameters $(1, D_k)$ that converge to $N$-BBM not only at the level of the hydrodynamic equations but also at the level of processes (i.e: $D_k \to \delta_0$ as $k\to \infty$).


\section{Scaling Limit} 
\label{sec_scaling_limit}
As mentioned before, we are not able to prove the scaling limit of the $(b,D)$-BBM for general $(b,D)$;
nevertheless, we can do it for a class of processes that is large enough to bound from below the asymptotic velocities in the general case. This class has nonempty intersection with the $(b,D)$-BBM family but neither of them is contained in the other one. We think this result is of independent interest.

We introduce a process for which the number of particles $N$ is conserved. Between branching times, the particles diffuse as independent Brownian Motions. At rate $ \lambda N $,
a subset of $ k $ elements $ \{ \ell_1,\ldots,\ell_k\}\subset \{ 1,\ldots,N\} $ is chosen uniformly at random.
We suppose without loss of generality that $ \ell_1<\ldots<\ell_k $.
Instantaneously, with probability $ p(i,j) $ the particle with quantile $ \ell_i $ jumps on top of the one with quantile $ \ell_j $. Here $p$ is a probability on $\{ (i,j)\colon 1\le i < j \le k \} $.
For technical reasons, we allow particles to be located at $ -\infty $.
We call $(N,p)$-BBM a process with this distribution.

For a particle configuration $ \zeta\in [-\infty,\infty)^N $, we consider the distribution function of the associated empirical measure
\begin{align}\label{defi0001}
F_\zeta(x)=\frac{1}{N}\sum_{i=1}^N\mathbbm{1}\{ \zeta(i)\le x \}, x\in\R.
\end{align}
Let also $ h_p:[0,1]\to\BB R $ be the function defined by
\begin{align}\label{defi0002}
h_p(v) = \lambda\sum_{r=1}^{k-1} \hat p(r)\binom{k}{r}v^r(1-v)^{k-r},
\end{align}
where 
\begin{align}\label{p_hat}
\hat p(r)=\sum_{i\le r}\sum_{j> r}p(i,j) .
\end{align}
The coefficient $ \hat p(r) $ represents the probability of a particle with quantile smaller or equal than $ r $ jumping on top of a particle with quantile strictly larger than $ r $. We have the following hydrodynamic limit.

\begin{theorem}\label{thm_hydro}
	Fix $ k\ge 2 $, $ \lambda>0 $ and a probability $ p=(p(i,j))_{1\le i<j\le k} $.
	For every $ N\ge k $, 
	let $ \{Y^N_t:t\ge 0\} $ be the $(N,p)$-BBM
	with parameters $ \lambda>0 $ and $p$.
	Suppose that the initial distributions satisfy
	\begin{align*}
	\lim_{N\to\infty}\|F_{Y^N_0}-U_0 \|_\infty=0 \mbox{ in probability},
	\end{align*}
	being $ U_0 $ the distribution function of a probability on $[-\infty,\infty)$.
	Then, for every $ t>0 $,
	\begin{align}\label{reposera}
	\lim_{N\to\infty}
	\| F_{Y_t^N}-U(t,\cdot) \|_\infty
	=0 \mbox{ in probability},
	\end{align}
	where $U$ is the unique bounded solution of the F-KPP equation
	\begin{align}\label{kpp_eq_1}
	&\partial_t U=\tfrac{1}{2}\partial_{xx}U-h_p(U),\\ \label{kpp_eq_2}
	\nonumber	& U(0,\cdot)=U_0.
	\end{align}
\end{theorem}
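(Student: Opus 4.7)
The plan is to prove this via the standard martingale/tightness approach, combined with a two-particle correlation bound in the spirit of the one alluded to in the introduction. Let $\mu_t^N = \frac{1}{N}\sum_{i=1}^N \delta_{Y_t^N(i)}$ denote the empirical measure. For a test function $\varphi \in C_c^\infty(\R)$, Dynkin's formula yields the semimartingale decomposition
\[
\langle \mu_t^N,\varphi\rangle = \langle \mu_0^N,\varphi\rangle + \int_0^t \left[\tfrac{1}{2}\langle \mu_s^N,\varphi''\rangle + J_s^N(\varphi)\right] ds + M_t^N(\varphi),
\]
with $M^N(\varphi)$ a martingale. The jump drift $J_s^N(\varphi)$ is computed by averaging over the uniform choice of $k$-subset and the pair $(i,j)$: writing $\varphi(Y_s^N(\ell_j))-\varphi(Y_s^N(\ell_i)) = \int_\R \varphi'(x)\bigl[\mathbbm{1}\{Y_s^N(\ell_i)\le x\} - \mathbbm{1}\{Y_s^N(\ell_j)\le x\}\bigr]\,dx$ and using that the $r$-th order statistic among the $k$ chosen particles lies at position $\le x$ exactly when at least $r$ of them do, a short combinatorial computation gives
\[
J_s^N(\varphi) = \lambda \int_\R \varphi'(x) \sum_{r=1}^{k-1} \hat p(r)\, \P\!\left(R_{N, F_{Y_s^N}(x)} = r\right) dx,
\]
where $R_{N,v}$ is hypergeometric with parameters $(N,\lfloor Nv\rfloor,k)$. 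Since its law converges to Binomial$(k,v)$ uniformly in $v$ with error of order $1/N$, this integrand differs from $h_p(F_{Y_s^N}(x))$ by $O(1/N)$.

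Next I would establish tightness of $\{\mu_\cdot^N\}_N$ in the Skorokhod space of measure-valued paths. The quadratic variation of $M^N(\varphi)$ decomposes into a Brownian part of order $\|\varphi'\|_\infty^2\, t/N$ (from $N$ independent driving Brownian motions) and a jump part of order $t/N$ (jump sizes $O(1/N)$ at global rate $\lambda N$); hence $M_t^N(\varphi)\to 0$ in $L^2$ and Aldous's criterion gives tightness. Any subsequential limit $\mu$ has a cumulative distribution function $U$ satisfying the weak formulation of \eqref{kpp_eq_1}, and because $h_p$ is a polynomial (in particular Lipschitz on $[0,1]$), uniqueness of bounded solutions follows from a standard Duhamel--Gronwall argument.

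The delicate step is to pass to the limit in the nonlinear term, i.e.\ to show
\[
\int_0^t\!\!\int_\R \varphi'(x)\, h_p(F_{Y_s^N}(x))\, dx\, ds \ \longrightarrow\ \int_0^t\!\!\int_\R \varphi'(x)\, h_p(U(s,x))\, dx\, ds
\]
in probability. Because $h_p$ is Lipschitz, this reduces to controlling $\Var\bigl(F_{Y_s^N}(x)\bigr)$ uniformly in $(s,x)$, which in turn follows from a bound of the form
\[
\sup_{i\neq j}\left|\Cov\!\left(\mathbbm{1}\{Y_s^N(i)\le x\},\,\mathbbm{1}\{Y_s^N(j)\le x\}\right)\right| = O(1/N).
\]
Establishing this two-particle correlation estimate for the $(N,p)$-BBM dynamics is the main obstacle; the symmetric combinatorial structure of the $(N,p)$ jumps (uniform $k$-subsets rather than strictly rank-dependent branching) is precisely what makes the class tractable. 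Once the bound is in hand, $F_{Y_t^N}(x)\to U(t,x)$ in probability for each fixed $x$, and since both functions are nondecreasing in $x$ while $U(t,\cdot)$ is continuous (by parabolic regularity of F-KPP solutions), Polya's theorem upgrades this pointwise convergence to the uniform convergence claimed in \eqref{reposera}.
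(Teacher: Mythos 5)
Your proposal takes a genuinely different route from the paper: you go through the measure-valued semimartingale decomposition, tightness via Aldous, and identification of subsequential limits, whereas the paper works directly with the conditional expectation $U^N_\zeta(s,x)=\E_\zeta[F^N(s,x)]$, shows (Lemma~\ref{thm_finite_hydro}) that $U^N_\zeta$ solves a discrete analogue of \eqref{kpp_eq_1} with source $\E_\zeta[h^N_p(F^N)]$, and then closes with the F-KPP stability estimate (Theorem~\ref{thm_quinoto}) and Chebyshev. Your computation of the jump drift $J^N_s(\varphi)$ as a hypergeometric average agrees with the paper's $h^N_p$, and the observation that the hypergeometric-to-binomial replacement costs $O(1/N)$ is exactly the paper's $\mathcal E^N_{2,\zeta}$ error. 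The approaches are comparable in spirit; yours avoids classical PDE well-posedness in exchange for a tightness argument.

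There is, however, a genuine gap at precisely the place you flag as the main obstacle. You assert that
\begin{align*}
\sup_{i\neq j}\left|\Cov\bigl(\mathbbm{1}\{Y_s^N(i)\le x\},\,\mathbbm{1}\{Y_s^N(j)\le x\}\bigr)\right| = O(1/N)
\end{align*}
``follows from the symmetric combinatorial structure of the $(N,p)$ jumps,'' but you give no argument for it, and it is not automatic: although the $k$-tuple selection is exchangeable, each jump couples up to $k$ particles, and a naive attempt to close a hierarchy of correlation functions meets the usual BBGKY difficulties. This is exactly where the paper spends its technical effort. Lemma~\ref{caramelo} (proved in Appendix~\ref{proof_correlations}) establishes the bound by constructing, backwards in time, the \emph{clan of ancestors} $\psi_t(j)$ of each label $j$ --- the set of labels whose Poisson marks could have influenced $Y^N_t(j)$. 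One shows that $\E|\varphi_s(j)|\le e^{\lambda k(k-1)s}$ and that the probability that two clans intersect by time $t$ is bounded by a constant (depending on $k$, $\lambda$, $t$) over $N-1$; on the complementary event of disjoint clans the indicators factorize exactly, giving the $O(1/N)$ covariance bound. Without this or an equivalent argument, your martingale/tightness scheme cannot identify the limit, so the proof is incomplete. Everything else in your sketch --- the Dynkin decomposition, the quadratic-variation bound $O(t/N)$, the Lipschitz reduction of the nonlinear term, and the final upgrade to uniform convergence via monotonicity and continuity of $U(t,\cdot)$ --- is sound.
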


We list below some interesting particular cases of the $ (N,p) $-BBM and their hydrodynamic equations.

\begin{enumerate}
	\item
	The particle with quantile ${k-1} $ deterministically jumps on top of the one with quantile $k $.
	This corresponds to $ p(i,j)=\mathbbm{1}\{ i=k-1 \} \mathbbm{1}\{ j=k \}$,
	leading to
	$ \hat p(r)=\mathbbm{1}\{r=k-1\} $
	and $ h_p(v)=\lambda kv^{k-1}(1-v) $.
	If we take $ \lambda=\frac{1}{k} $,  \eqref{kpp_eq_1}  reads
	\begin{align}\nonumber
	\partial_t U=\tfrac{1}{2}\partial_{xx}U-U^{k-1}(1-U).
	\end{align}
	This case is important because
	we are going to bound any $(b,D)$-BBM by one of this processes by choosing $k$ large enough.
	Observe that for $k=2$ the standard F-KPP equation is obtained.
	This scaling limit has been proved in \cite{GJM}. In fact our proof of Theorem \ref{thm_hydro} is a
	non-trivial generalization of the proof appearing there.
	\item 
	The particle with smallest position jumps on top of the one with largest position, i.e. $ p(i,j)=\mathbbm{1}\{ i=1 \} \mathbbm{1}\{ j=k \}$.
	This results in
	$ \hat p(r)=1 $ for every $ 1\le r\le k-1 $,
	and $ h_p(v) =\lambda(1-v^k-(1-v)^k)$.
	Taking $ \lambda=1 $ we obtain the equation
	\begin{align}\nonumber
	\partial_t U=\tfrac{1}{2}\partial_{xx}U-(1-U^k-(1-U)^k).
	\end{align}
	In the limit as $k\to \infty$ we get the free-boundary problem
	\begin{align}\nonumber
	\partial_t U=\tfrac{1}{2}\partial_{xx}U-\mathbbm{1}\{0<U<1\}.
	\end{align}
	\item The particle with smallest position jumps on top of a uniformly chosen one.
	This is $ p(i,j)=\frac{1}{k-1}\mathbbm{1}\{ i=1 \} $, giving $\hat p(r)=(k-r)/(k-1)$.
	Taking $ \lambda=1 $, we obtain
	\begin{align}\nonumber
	\partial_t U=\tfrac{1}{2}\partial_{xx}U-(1-U-(1-U)^k).
	\end{align}
	As already mentioned, 
	this equation has been used in \cite{BBP} as a smooth approximation to prove the existence of solution of the concerned free-boundary problem.
	Observe that if we  take $k=N$ (allowing $k$ to depend on $N$, which is not covered in our theorem), we obtain the $N$-BBM.
	\item 
	Fix a continuous function $ h:[0,1]\to [0,\infty) $ satisfying $ h(0)=h(1)=0 $ and $ h(v)>0 $ for $ v\in (0,1) $.
	For every $ \varepsilon>0 $ there exists $ k\in\BB N $ such that
	the $ k $-th Bernstein's polynomial
	\begin{align}\nonumber
	h_k(u)=\sum_{r=1}^{k-1}h\Big(\frac{r}{k}\Big)\binom{k}{r}u^r(1-u)^{k-r}
	\end{align}
	satisfies $ \| h-h_k \|_\infty<\varepsilon $.
	Taking $ \lambda=\sum_{r=1}^{k-1}h(\frac{r}{k}) $ and $ p(i,j)=\lambda^{-1}\mathbbm{1}\{j=i+1\}h(\frac{i}{k}) $, $ 1\le i\le k-1 $,
	we obtain a particle system whose hydrodynamic limit approximates as well as desired the F-KPP equation
	\begin{align}\nonumber
	\partial_t U=\frac{1}{2}\partial_{xx}U-h(U).
	\end{align}
\end{enumerate}
That is, the family of sources $h_p$ produced by this model is dense in the set of continuous functions from $[0,1]$ to $\R_{\ge0}$ that vanishes at the boundary.

\medskip

Once the hydrodynamic limit is established, we follow a strategy previously used in \cite{BCDM+86} to bound from below the asymptotic velocities $v_N$ in terms of the minimal velocity of the limiting equation. The details are given in sections \ref{sctn_hydro} and \ref{sctn_limiting_velocity_kN}. The upper bound is obtained straightforwardly by means of a standard BBM.

\paragraph{{\bf Graphical Construction.}}We end this section with a graphical construction of the $(N,p)$-BBM. Fix $ k\in\BB N $, $ \lambda>0 $ and $p=(p(i,j))_{1\le i< j \le k}$ as in Section \ref{sec_scaling_limit}.
For every $ N\ge k $, we introduce the following three 
elements:
\begin{enumerate}[label=\roman{*}.,font=\itshape]
	\item a random initial configuration $ Y^N_0\in [-\infty,\infty)^N $,
	\item \label{ross}
	an $ N $-dimensional Brownian Motion $ B^N=(B^N_{t}(1),\ldots,B^N_{t}(N))_{t\ge 0} $,
	\item\label{bronte}
	a marked Poisson process  $ (T^N,S^N,R^N)= \{ (T^N_{n},S^N_{n},R^N_n):n\in\BB N\} $.
\end{enumerate}
These random objects are assumed to be defined in the same probability space and for fixed $ N $ are assumed to be independent.
The marks $ T^N_{1}<T^N_{2}<\ldots $ are given by a Poisson point process of intensity $ \lambda N $ in $[0,\infty)$ and represent the jumping times. 
The second coordinates $ S^N_{1},S^N_{2}\ldots $ are $ k $-tuples  of the set of quantiles $ \{ 1,\ldots,N \} $ chosen at random uniformly. 
Finally, $R_n^N$ is a random pair $(i,j)$ with $1\le i<j\le k$ chosen with law $p$.
The $ (N,p) $-BBM $ Y^{N}=\{ Y^{N}_t:t\ge 0 \} $ is constructed as a deterministic function of the triple \textit{i}-\textit{iii}.
Inductively, suppose $ Y^{N} $ has been defined in the time interval $[0, T^N_{n-1}] $ for $ n\ge 1 $ (we use the convention $ T^N_0=0 $), 
and set $ Y^{N}_t = Y^{N}_{T_{n-1}^N}+
B^N_t-B^N_{T_{n-1}^N}$ for $ T^N_{n-1}<t<T^N_{n} $,
and
$ Y^{N}_{T_{n}}=\Gamma_{S^N_{n}(R^N_{n})}(Y^{N}_{T_{n}-}) $. 
Here,
if $S^N_{n}=\{\ell_1,\ldots,\ell_k\}$ with $ \ell_1<\ldots<\ell_k $, then
$S_{n}^N(i,j) = (\ell_{i},\ell_{j})$,
and $ \Gamma_{\ell_{i},\ell_{j}} $ acts on a particle configuration by setting the particle with quantile $ \ell_i $ on top of the one with quantile $ \ell_j $ (a rigorous definition of $ \Gamma $ is given in equation \eqref{def.gamma} below).

\section{Mass-transport Comparison}\label{sctn_comparison}

In this section we consider an extension of the $(b,D)$-BBM, that we call the $ (b,\mathbf{D}) $-BBM.
The difference is that, instead of a sole probability $ D $, the $ (b,\mathbf D) $-BBM is constructed in terms of a sequence of probabilities $ \mathbf D=(D_j)_{j\in\BB N} $.
Also particles are allowed to be located at $ -\infty $.
All the processes appearing in this paper are $ (b,\mathbf{D}) $-BBMs. 
Proposition \ref{thm_comparison}, that gives conditions under which two $ (b,\mathbf{D}) $-BBMs are comparable in the mass-transport sense, allows us to dominate any $(b,D)$-BBM satisfying the hypotheses of Theorem \ref{thm_main} from below and above by treatable processes. 

We start with some basic facts about deterministic particle configurations. For a configuration $  \zeta =(\zeta(1),\ldots,\zeta(N))\in [-\infty,\infty)^N$, we use $\sigma_\zeta$ to denote the permutation on the labels that sorts the particles, using the labels to break ties, i.e. $\sigma_\zeta(i)$ denotes the label of the $i$-th quantile of $\zeta$, and is defined as the only one satisfying the following conditions,
\begin{enumerate}[label=\roman*.]
	\item $ \zeta(\sigma_\zeta(i))\le \zeta(\sigma_\zeta(j))$ if $ i<j $;
	\item $ \sigma_\zeta(i) < \sigma_\zeta(j) $ if $ \zeta(\sigma_\zeta(i))=\zeta(\sigma_\zeta(j)) $ and $ i<j $.
\end{enumerate}
We simplify the notation by writing $ \zeta[i] $ instead of $ \zeta(\sigma_\zeta(i)) $.
For $ 1\le i,j \le N $,
let $\Gamma_{ij}(\zeta) $ be the configuration obtained from $ \zeta $ by putting the particle with quantile $ i $ on top of the one with quantile $j$,
\begin{equation}\label{def.gamma}
\Gamma_{ij}(\zeta)=\eta, \text{ with} \quad \eta(\sigma_\zeta(i))=\zeta[j] \text{ and } \eta(\ell)=\zeta(\ell), \quad \ell\neq \sigma_\zeta(i).
\end{equation}

For 
$ x\in[-\infty,\infty) $, let
$\CAL A_x( \zeta)= (\zeta(1),\ldots,\zeta(N),x)$ be the {\em append} operator.
If $ N\ge 2 $, for $ j\in\{ 1,\ldots,N \} $, let
$ \CAL T_j( \zeta)=(\zeta(1),\ldots,\zeta(\sigma_\zeta(j)-1),\zeta(\sigma_\zeta(j)+1),\ldots,\zeta(N)) \in[-\infty,\infty)^{N-1}$ be the {\em trim} operator that removes the label corresponding to the $ j $-th quantile.

For particle configurations
$ \zeta\in[-\infty,\infty)^N $ and $ \zeta'\in[-\infty,\infty)^{N'} $,
we say that $ \zeta $ is dominated by $ \zeta' $ in the mass-transport sense,
and write $ \zeta\cle \zeta' $,
if
\begin{align*}
\sum_{i=1}^{N}\mathbbm{1}\{ \zeta(i)>x \}\le 
\sum_{i=1}^{N'}\mathbbm{1}\{\zeta'(i)>x \}
\quad
\forall x\in [-\infty,\infty).
\end{align*}
We present the following lemma without proof.

\begin{lemma}\label{domination_properties} 
	Fix $  \zeta\in [-\infty,\infty)^{N}$ and $\zeta'\in[-\infty,\infty)^{N'} $.
	\begin{enumerate}
		\item  \label{pez} The following conditions are equivalent:
		\begin{enumerate}
			\item $  \zeta\cle \zeta' $;
			\item $ N\le  N' $ and
			$ \zeta[i]\le \zeta'[i+ N'-N] $ for every $ i\in\{ 1,\ldots,N \} $;
			\item $ N\le N' $ and
			there exists $ \kappa:\{ 1,\ldots,N \}\to\{ 1,\ldots, N' \} $ injective such that
			$ \zeta(i)\le \zeta'(\kappa (i)) $ for every $ i\in\{ 1,\ldots,N \} $.
		\end{enumerate}
		\item\label{puma}
		$ \zeta\cle  \CAL A_x(\zeta) $ for every $ x\in [-\infty,\infty) $.
		\item\label{liebre}
		For $1\le  i<j \le N $,
		$ \zeta\cle \Gamma_{ij}(\zeta) $.
		\item\label{toro} 
		If $ \zeta\cle\zeta' $, the following properties hold:
		\begin{enumerate}
			\item 
			$ \CAL A_x( \zeta)\cle \CAL A_{x'}(\zeta') $
			for every $-\infty\le x\le x'<\infty $;
			\item 
			$\CAL T_{i}(\zeta)\cle \CAL T_{i+ N'-N}(\zeta') $ for every $ i\in\{ 1,\ldots,N \} $;
			\item 
			if for $ i,j\in\{ 1,\ldots,N \} $
			we call $ i'= i+N'-N $ and $ j'= j+ N'-N $,
			then $ \Gamma_{ij}(\zeta)\cle\Gamma_{ i' j'}(\zeta') $.
		\end{enumerate}
	\end{enumerate}
\end{lemma}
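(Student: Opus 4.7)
The plan is to first prove the three-way equivalence in part (1), from which items (2), (3), and (4) follow by short direct verifications. Write $m = N' - N$ and $F_\zeta(x) = \sum_{i=1}^{N}\mathbbm{1}\{\zeta(i)\le x\}$; then $\zeta \cle \zeta'$ is equivalent to
\[
F_{\zeta'}(x) - F_\zeta(x) \le m \quad \text{for all } x \in [-\infty,\infty),
\]
and taking $x$ smaller than every finite entry of $\zeta'$ forces $m \ge 0$.

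For $(a)\Leftrightarrow(b)$ I would use the identity $\zeta[i]\le x \Leftrightarrow F_\zeta(x)\ge i$. If $(b)$ fails at some index $i_0$, evaluating the tail-count inequality at $x=\zeta'[i_0+m]$ yields $F_{\zeta'}(x)\ge i_0+m$ and $F_\zeta(x)\le i_0-1$, violating $(a)$. Conversely, given $(b)$ and an arbitrary $x$, set $j=F_{\zeta'}(x)-m$; if $j\ge 1$ then $\zeta'[j+m]\le x$ and hence $\zeta[j]\le x$ by $(b)$, so $F_\zeta(x)\ge F_{\zeta'}(x)-m$. For $(b)\Leftrightarrow(c)$: given $(b)$, the injection $\kappa=\sigma_{\zeta'}\circ(\cdot+m)\circ\sigma_\zeta^{-1}$ satisfies $\zeta(i)=\zeta[\sigma_\zeta^{-1}(i)]\le\zeta'[\sigma_\zeta^{-1}(i)+m]=\zeta'(\kappa(i))$, proving $(c)$; conversely, any injection $\kappa$ witnessing $(c)$ yields the tail-count inequality
\[
\#\{i:\zeta(i)>x\}\le\#\{\kappa(i):\zeta'(\kappa(i))>x\}\le\#\{\ell:\zeta'(\ell)>x\},
\]
which is $(a)$.

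Parts (2) and (3) are then immediate from $(c)$ with $\kappa(i)=i$: in (2) the appended point cannot decrease any tail count; in (3) the replacement $\zeta[i]\mapsto\zeta[j]$ with $\zeta[i]\le\zeta[j]$ only raises the value at the single label $\sigma_\zeta(i)$ while leaving every other label unchanged. Item (4)(a) follows from $(c)$ by extending a witnessing injection with $\kappa(N+1)=N'+1$; the hypothesis $x\le x'$ gives $\CAL A_x(\zeta)(N+1)=x\le x'=\CAL A_{x'}(\zeta')(N'+1)$.

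For (4)(b) and (4)(c) I would use $(b)$, after first writing down how the sorted values transform under each operation:
\[
\CAL T_i(\zeta)[k]=\begin{cases}\zeta[k] & \text{if } k<i,\\ \zeta[k+1] & \text{if } k\ge i,\end{cases}\qquad \Gamma_{ij}(\zeta)[k]=\begin{cases}\zeta[k] & \text{if } k<i \text{ or } k\ge j,\\ \zeta[k+1] & \text{if } i\le k\le j-1.\end{cases}
\]
The $\Gamma_{ij}$ formula comes from the observation that removing one copy of $\zeta[i]$ and inserting one copy of $\zeta[j]\ge\zeta[i]$ shifts the sorted values of positions in $\{i,\ldots,j-1\}$ down by one and leaves the rest untouched. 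A case split on $k$ then matches each entry of $\CAL T_i(\zeta)[k]$ with $\CAL T_{i+m}(\zeta')[k+m]$, and each entry of $\Gamma_{ij}(\zeta)[k]$ with $\Gamma_{i'j'}(\zeta')[k+m]$; every resulting inequality is an instance of $(b)$ at index $k$ or $k+1$.

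The one step involving actual bookkeeping is the sorted-order formula for $\Gamma_{ij}$; one must also check that the label-based tie-breaking does not interfere with the index-shift-by-$m$ analysis, but since $\Gamma$ and $\CAL T$ do not introduce new labels and only act by value substitution or label deletion, this causes no trouble.
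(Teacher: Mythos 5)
The paper states this lemma without proof, so there is nothing to compare against, but your proposal contains a genuine gap in the very first step. You claim that ``taking $x$ smaller than every finite entry of $\zeta'$ forces $m\ge 0$.'' For such an $x$, $F_{\zeta'}(x)$ equals the number of $-\infty$ entries of $\zeta'$ (or $0$ if there are none), so the inequality $F_{\zeta'}(x)-F_\zeta(x)\le m$ gives no lower bound on $m$ because $F_\zeta(x)\ge 0$; in the simplest finite case your chosen $x$ yields $F_{\zeta'}(x)=0$ and the inequality $-F_\zeta(x)\le m$ is vacuous. The way to extract $N\le N'$ when no $-\infty$ entries are present is to read the original tail-count inequality at $x$ below every entry of both configurations, where the left side is $N$ and the right side is $N'$. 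More seriously, when $-\infty$ entries are allowed (as the paper's state space $[-\infty,\infty)^N$ permits, and $x=-\infty$ is included in the quantifier), the implication $\zeta\cle\zeta'\Rightarrow N\le N'$ is \emph{false}: take $\zeta=(-\infty,-\infty,0)$ and $\zeta'=(0,0)$. Then $\sum_i\mathbbm{1}\{\zeta(i)>x\}=\mathbbm{1}\{x<0\}\le 2\,\mathbbm{1}\{x<0\}=\sum_i\mathbbm{1}\{\zeta'(i)>x\}$ for every $x\in[-\infty,\infty)$, so $\zeta\cle\zeta'$, yet $N=3>2=N'$. The reason is that $\mathbbm{1}\{-\infty>x\}=0$ for all admissible $x$, so a $-\infty$ entry of $\zeta$ contributes nothing to the left tail count and is invisible to the relation $\cle$. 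Thus the inference ``(a) $\Rightarrow$ $N\le N'$'' needs an extra hypothesis (e.g.\ no $-\infty$ entries, or $N\le N'$ assumed a priori, or replacing $>$ by $\ge$ in the definition of $\cle$); it cannot be manufactured from the domination alone, and your argument quietly papers over this.

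The remainder of the proposal is correct once $N\le N'$ is granted. The equivalence $(a)\Leftrightarrow(b)$ via $\zeta[i]\le x\Leftrightarrow F_\zeta(x)\ge i$ is clean (you should say a word about $j=F_{\zeta'}(x)-m\le 0$, which is trivial since then $F_{\zeta'}(x)\le m$); the explicit injection $\kappa=\sigma_{\zeta'}\circ(\cdot+m)\circ\sigma_\zeta^{-1}$ correctly witnesses $(b)\Rightarrow(c)$; $(c)\Rightarrow(a)$ follows as you write; items (2), (3), and (4)(a) reduce to exhibiting an injection; and your sorted-value formulas for $\mathcal{T}_i$ and $\Gamma_{ij}$ are right (the latter because removing one copy of $\zeta[i]$ and inserting a copy of $\zeta[j]\ge\zeta[i]$ shifts only the ranks $i,\ldots,j-1$), so (4)(b) and (4)(c) follow from (b) by the case split you describe.
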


The first statement says that $\zeta \cle \zeta'$ if and only if $\zeta$ can be embedded into $\zeta'$ by a transformation that moves each particle to the right.
Items \ref{puma} and \ref{liebre} mean that
the particle configuration increases if a particle is added or if a particle jumps to the right.
Item \ref{toro} says that the order is preserved if we add a particle, if we remove one particle, or if a particle jumps on top of another one, provided the involved particles are properly chosen.

Unlike the $(b,D)$-BBM, the $(b,\mathbf{D})$-BBM that we define now allows the killing probability to depend on the quantile of the branching particle. 
Between jumping times,  particles move as independent Brownian Motions,
and the quantile $ j $ of the branching particle is determined in terms of $ b $ as before.
If $ j=1 $, the quantile  of the particle that is going to be killed is chosen to be $ i=-\infty $ (no killing). If otherwise $ j>1 $,
we have $ i=-\infty $ with probability $ D_{j-1}(-\infty) $ and 
for $ 1\le i' < j $, $i=i'$ with probability
$D_{j-1}\big(\frac{i'}{j-1}- \big) - D_{j-1}\big(\frac{i'-1}{j-1}-\big)$.
Of course the $ (b,D) $-BBM is a $ (b,\mathbf{D}) $-BBM with $ D_j=D $ for every $ j\in\BB N $.

Let $ X $ and $ {X}^{'} $ be a $ (b,\mathbf{D}) $-BBM and a $ (b',\mathbf{D}') $-BBM respectively. We omit writing the superscripts indicating the initial number of particles when no confusion can arise. We say that (the initial condition) $ X_0$ is stochastically dominated by $  X_0^{'} $, and write $ X_0\le_{st}  X_0^{'} $, if they can be coupled in such a way that $ X_0\cle X_0^{'} $ almost surely.
We say that the process $ X $ is stochastically dominated by $ X^{'} $,  and write $ X\le_{st} X^{'} $, if they can be coupled in such a way that almost surely $X_t\cle  X_t^{'}$ for every $t\ge 0$.

\begin{proposition}\label{thm_comparison}
	Suppose that $ b $ and $ b' $ satisfy
	$ b(x)\le  b'(x') $ if $ x\le x' $.
	Suppose further that $ \mathbf{D}=(D_j)_{j\in\BB N} $ and $ \mathbf{D}'=( D'_j)_{j\in\BB N} $ are such that, for every $ j,j'\in\BB N $, $ {D_j}\le { D'_{j'}} $ pointwise.
	Let $ X $ and $X^{'} $ be a $ (b,\mathbf{D}) $-BBM and a $ (b',\mathbf{D}') $-BBM respectively with
	random initial conditions satisfying
	$ X_0\le_{st} X^{'}_0 $.
	Then $ X\le_{st} X^{'} $.
\end{proposition}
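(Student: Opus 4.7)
The plan is to couple $X$ and $X'$ on a common probability space via a Harris-type graphical construction so that $X_t \cle X'_t$ for every $t \ge 0$ almost surely; this gives $X \le_{st} X'$ by definition. Throughout, I maintain a (random) injection $\kappa_t \colon \{1,\ldots,N_t\} \to \{1,\ldots,N'_t\}$ realizing the domination via $X_t(l) \le X'_t(\kappa_t(l))$ for all $l$, an equivalent formulation of $\cle$ by Lemma \ref{domination_properties}(1)(c). At $t=0$, the hypothesis $X_0 \le_{st} X'_0$ furnishes $\kappa_0$; since event times are almost surely discrete, I proceed by induction on them.

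Between consecutive events I couple Brownian motions label by label: particle $l$ in $X$ and particle $\kappa_t(l)$ in $X'$ share the same Brownian motion, while particles of $X'$ outside $\mathrm{Im}\,\kappa_t$ evolve with independent Brownian motions. Paired particles then undergo identical increments, so the inequalities $X_t(l) \le X'_t(\kappa_t(l))$ persist throughout any event-free interval. At each event time I use shared uniform marks to pair choices: a branching at quantile $j$ in $X$ is paired with a branching at the shifted quantile $j' := j + (N'-N)$ in $X'$. An elementary computation gives $\tfrac{j-1}{N-1} \le \tfrac{j'-1}{N'-1}$ when $j \le N$, so by hypothesis $b(\tfrac{j-1}{N-1}) \le b'(\tfrac{j'-1}{N'-1})$, which makes the pairing compatible with the branching rates via standard thinning ($X'$ may fire without $X$ firing, but not conversely). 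Given $(j,j')$, the killing values are selected by applying the inverse-CDF transform to a single shared uniform $W$ using $D_{j-1}$ and $D'_{j'-1}$; the pointwise inequality $D_{j-1}\le D'_{j'-1}$ forces $Q_{D_{j-1}}(W) \ge Q_{D'_{j'-1}}(W)$, which translates through the partition of $[0,1)$ into quantiles into the inequality $i' \le i + (N'-N)$, with ``no killing'' ($-\infty$) as a limiting case.

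Preservation of $\cle$ at each event then reduces to cases covered by Lemma \ref{domination_properties}(4): if the event fires in both processes, the quantile pairing and the inequality on $(i,i')$ yield $\Gamma_{ij}(X) \cle \Gamma_{i'j'}(X')$ (or the analogous statement with $\CAL A_x$ in ``no-killing'' subcases) by item (4)(c) together with (4)(a); the subcase ``$X$ kills, $X'$ does not'' (consistent with $Y \ge Y'$ only when $Y' = -\infty$) is handled by direct bookkeeping of quantiles, since $X'$ gains a particle while $X$ keeps its count, so the shift $N'-N$ increases by $1$. If the event fires only in $X'$ by thinning, items (2) and (3) give $X' \cle X'_{\mathrm{new}}$ and hence $X \cle X' \cle X'_{\mathrm{new}}$. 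Updating $\kappa_t$ to reflect the new labels and combining with the Brownian coupling between events yields $X_t \cle X'_t$ for every $t$ almost surely. The main obstacle is verifying that the proposed event-time coupling actually realizes the pairing $(j,i)\leftrightarrow(j',i')$ almost surely while reconciling the different atomic structures of $D_{j-1}$ and $D'_{j'-1}$ on the quantile sets $\{-\infty,1,\ldots,j-1\}$ and $\{-\infty,1,\ldots,j'-1\}$; the pointwise hypothesis $D_j \le D'_{j'}$ is precisely what is needed for this monotone quantile coupling.
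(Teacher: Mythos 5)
Your proposal follows essentially the same route as the paper: fix the quantile shift $j' = j + (N'-N)$, couple branching clocks via the monotonicity of $b, b'$ (thinning so that $X'$ fires whenever $X$ does), use a shared uniform with the generalized inverse CDF to couple the killed quantiles via the pointwise inequality $D_{j-1} \le D'_{j'-1}$, and invoke the items of Lemma \ref{domination_properties} to check that $\cle$ is preserved at each event. The one place where you flag an obstacle and wave at it --- translating the inverse-CDF inequality into the quantile inequality $m' \le m + (N'-N)$ while reconciling the different discretizations $\{1,\dots,j-1\}$ and $\{1,\dots,j'-1\}$ --- is handled in the paper by the elementary estimate $\tfrac{m}{l-1} \le \tfrac{m + (l'-l)}{l'-1}$ (valid since $m \le l-1$), and the case where $X$ kills but $X'$ does not is absorbed automatically because $m' = -\infty \le m + (N'-N)$ still satisfies the inequality; so your ``direct bookkeeping'' subcase does not need special treatment beyond what the inequality already encodes.
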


Observe that the hypotheses over $ b $ and $ b' $ are satisfied if any of the following two conditions hold:
\begin{enumerate}
	\item[i.] $ b $ is non-decreasing and $ b\le  b' $ pointwise.
	\item[ii.] $ \sup_{x\in [0,1]}b(x)\le \inf_{x\in [0,1]} b'(x) $.
\end{enumerate}

\begin{proof}
	Let $N, N'$ be total number of particles in $X_0, X_0'$ respectively. Lemma \ref{domination_properties} implies $ N \le N' $.
	For every $ j\in\{ 1,\ldots,N \} $, call $ j'= j+ N'-N $.
	Consider exponential random variables $\{ \mathsf W_{\ell}:\ell\in\{ 1,\ldots,N \}\} $ and $\{{\mathsf W}'_{\ell}:\ell\in\{ 1,\ldots, N' \}\} $ such that $\mathsf W_\ell $ has rate $ b(\frac{\ell-1}{N-1}) $ for every $ \ell\in\{1,\ldots,N\} $ and 
	$ {\mathsf W}'_\ell $ has rate $  b'(\frac{\ell-1}{ N'-1}) $ for every $ \ell\in\{1,\ldots, N'\} $.
	Since $ b(\frac{j-1}{N-1})\le b'(\frac{ j'-1}{ N'-1}) $ for every $ j\in\{ 1,\ldots,N \} $, we can  couple them in such a way that, for every $ j\in\{ 1,\ldots,N \} $,   $ \mathsf W_{j}\ge {\mathsf W}'_{j'} $. To be precise, let $\mathsf X_1,\ldots, \mathsf X_{N'}$ be  independent two-dimensional Poisson point processes of intensity 1, and define $ \mathsf W'_{\ell}=\inf\{z>0:\big([0,z]\times [0,b'(\frac{\ell-1}{N'-1})]\big)\cap \mathsf {X}_\ell\neq\emptyset \}  $ for $ 1\le \ell\le N'-N  $,
	and 
	$ {\mathsf W}_{j}=\inf\{z>0:\big([0,z]\times [0, b(\frac{j-1}{N-1})]\big)\cap \mathsf  {X}_{j'}\neq\emptyset \}  $ 
	and 
	$ \mathsf W'_{j'}=\inf\{z>0:\big([0,z]\times [0,b'(\frac{j'-1}{N'-1})]\big)\cap \mathsf {X}_{j'}\neq\emptyset \}  $
	for $ j\in\{1,\ldots,N\} $.
	Call $ \tau' =\min_{\ell\in\{1,\ldots,N'\}} {\mathsf W} '_{\ell} $.
	In the time interval $ [0,\tau' ) $,
	we couple the Brownian displacements in such a way that 
	$ X_t(\sigma_{X_0}(j))- X'_t(\sigma_{X'_0}(j')) =X_0[j]-  X'_0[j'] $ for every $ j\in\{ 1,\ldots,N \} $ and every $ t\in [0,\tau' ) $ (we are coupling the trajectories of the $ N $ labels that are at the rightmost positions at time $ t=0 $).
	Item (\ref{pez}) of Lemma \ref{domination_properties} readily implies $ X_{t}\cle X'_{t} $ for every $ t\in [0,\tau') $.
	
	The particle configurations
	$ X_{\tau'} $ and $X'_{\tau'} $ will be constructed from a case-dependent modification of $ X_{\tau'-} $ and $  X'_{\tau'-} $.
	By an iterative argument, we can conclude once we have proven that $ X_{\tau'} \cle  X'_{\tau'} $.
	Let $l'= \argmin\{{\mathsf W} '_{\ell}:\ell\in\{ 1,\ldots, N' \}\}$. We split into cases:
	\begin{enumerate}
		\item If $ l'\le N'-N $,
		we set $ X_{\tau'}=X_{\tau'-} $, and use $D'_{l'-1} $ to obtain $ X'_{\tau'} $ from $X'_{\tau' -} $.
		Items \ref{puma} and \ref{liebre} of Lemma \ref{domination_properties}  guarantee that  $ X'_{\tau'- }\cle  X'_{\tau'} $, implying
		$ X_{\tau' }\cle  X'_{\tau' } $.
		\item 
		If $ l'>  N'-N$, we split again into two subcases:
		\begin{itemize}
			\item[(i)] If $ \mathsf W_{l} > \mathsf W'_{l'}$ ($ l= l'-(N'-N) $) we proceed as before. We set $ X_{\tau'}=X_{\tau'-} $, and use $  D'_{ l'-1} $ to modify $ X'_{\tau'-} $ and obtain $ X'_{\tau'} $ with $X_{\tau'}\cle  X'_{\tau'} $.
			\item[(ii)] If $ \mathsf W_{l} = \mathsf W'_{l'}$, call $ \eta= X_{\tau'-} $ and $ \eta'=   X'_{\tau'-} $. Let $ \xi \in[-\infty,\infty)^{l-1}$ (resp. $  \xi' \in[-\infty,\infty)^{ l'-1}$) be the particle configuration obtained from $ \eta $ (resp. from $ \eta' $) after removing the $ N-(l-1) $($ = N'-( l'-1) $) right-most particles. We are removing the particles $ \eta[l],\ldots,\eta[N] $ (resp. $  \eta'[ l'],\ldots, \eta'[ N'] $).
			We proceed to couple the quantiles of the particles that are going to be killed. For a distribution function $D$ on $ [-\infty,\infty) $,  consider the generalized inverse $D^{-1}:[0,1]\to [-\infty,\infty) $ defined by
			\begin{align}\nonumber
			D^{-1}(y)= \inf\{x\in\BB R: D(x)\ge y\}.
			\end{align}
			If $\mathsf U $ is a random variable uniformly distributed in $[0,1]$, then the (extended) random variable $ D^{-1}(\mathsf U) $ has law $ D $.
			The quantiles $ m $ and $ m' $ are defined by
			\begin{align*}\nonumber
			m = & -\infty \cdot  \mathbbm{1}\{D_{l-1}^{-1}(\mathsf U)=-\infty\}\\ &+
			\sum_{i=1}^{l-1}i \cdot \mathbbm{1}\big\{
			\tfrac{i-1}{l-1}\le  D_{l-1}^{-1}(\mathsf U)< \tfrac{i}{l-1} \big\}
			\\ \nonumber
			m'= & -\infty \cdot  \mathbbm{1}\{ (D'_{ l'-1})^{-1}(\mathsf U)=-\infty\} \\ & + 
			\sum_{i=1}^{ l'-1}i \cdot \mathbbm{1}\big\{
			\tfrac{i-1}{ l'-1}\le   (D'_{ l'-1})^{-1}(\mathsf U)< \tfrac{i}{l'-1} \big\},
			\end{align*}
			with the convention $ -\infty \cdot 0=0 $.
			Next we prove that $   m'\le m+  N'-N $.
			If $ m=-\infty $ then $ D^{-1}_{l-1}(\mathsf U)=-\infty $, that implies $ (D'_{ l'-1})^{-1}(\mathsf U)=-\infty $ since $  D_{l-1} \le  D'_{ l'-1} $ pointwise. So $  m'=-\infty $ and the desired inequality holds.
			If $ m\neq -\infty$,
			we have
			\begin{align*}
			D_{l-1}^{-1}(\mathsf U)<\frac{m}{l-1}
			\le
			\frac{m+( l'-l)}{l-1+( l'-l)}
			=
			\frac{m+ l'-l}{ l'-1},
			\end{align*}
			implying $  (D'_{ l'-1})^{-1}(\mathsf U)< \frac{m+ l'-l}{l'-1} $ (again because $  D_{l-1} \le   D'_{ l'-1} $ pointwise). So $  m'\le m+ l'-l=m+ N'-N $.
			Let $ \theta $ and $ \theta' $
			be the particle configurations obtained respectively from $ \xi $ and $  \xi' $ after 
			removing the quantiles $ m $ and $ m'$.
			Item \eqref{toro} in Lemma \ref{domination_properties} implies the dominance
			$ \theta\cle \theta' $.
			Finally, let $ \gamma $ (resp. $ \gamma' $) be the configuration obtained from $ \theta $ (resp. $ \theta' $)
			after (\textit{a}) adding the $ N-(l-1) $ particles that have been removed in the transformation
			from $ \eta $ to $ \xi $ (resp. from $  \eta' $ to $  \xi' $),
			and (\textit{b}) adding an extra particle at position $ \eta[l] $ (resp. $  \eta'[ l'] $).
			Again item \eqref{toro} in Lemma \ref{domination_properties} implies $ \gamma\cle \gamma' $.
			This subcase follows because $ \gamma=X_{\tau'} $ and $ \gamma'=  X'_{\tau'} $.
		\end{itemize}
	\end{enumerate}
	The proof is now complete. \qedhere
	
\end{proof}

\paragraph{{\bf A Lower Bound}} We end this section showing that under minimal assumptions on $b$ and $D$, the $(b,D)$-BBM can be bounded from below by an $(N,p)$-BBM with an adequately chosen $p$.

\begin{proposition}\label{bailey}
	Assume $D(x)\ge x^{k-1}$ and $b(x) \ge x^{k-1}$ for all $0\le x \le 1$. Let $X^N $ be a $(b,D)$-BBM 
	and $ Y^{N} $ an $ (N,p) $-BBM with $p(i,j)=\mathbbm{1} \{i=k-1,j=k\}$ and $\lambda = 1/k$.
	If $ Y^{N}_0\le_{st} X^N_0 $
	then $ Y^{N}\le_{st} X^N $.
\end{proposition}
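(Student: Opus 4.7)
The plan is to realize $Y^N$ as a $(b^Y,\mathbf D^Y)$-BBM for a suitably chosen birth function $b^Y$ and sequence of killing measures $\mathbf D^Y$, and then to invoke Proposition~\ref{thm_comparison} with $Y^N$ in the role of the smaller process. A direct computation on the graphical construction of the $(N,p)$-BBM (with $p(k-1,k)=1$ and $\lambda=1/k$) shows that the rate at which quantile $j$ plays the role of the recipient $\ell_k$ is $\lambda N\binom{j-1}{k-1}/\binom{N}{k}=\prod_{r=1}^{k-1}\tfrac{j-r}{N-r}$, while, conditional on $\ell_k=j$, the donor $\ell_{k-1}$ has CDF $F_j(i)=\binom{i}{k-1}/\binom{j-1}{k-1}$ on $\{k-1,\ldots,j-1\}$.

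The whole argument rests on the elementary telescoping inequality $\frac{i-r}{j-1-r}\le\frac{i}{j-1}$, valid for $r\ge0$ and $i\le j-1$, which yields
\[
\prod_{r=1}^{k-1}\tfrac{j-r}{N-r}\le\Big(\tfrac{j-1}{N-1}\Big)^{k-1}\qquad\text{and}\qquad F_j(i)\le\Big(\tfrac{i}{j-1}\Big)^{k-1}.
\]
The first bound lets me extend $b^Y$ from its prescribed grid values to a function on $[0,1]$ satisfying $b^Y(x)\le x^{k-1}$ everywhere (for instance by using the same product formula with $j-1$ replaced by $x(N-1)$, truncated to be nonnegative), so that the hypothesis $b\ge x^{k-1}$ gives $b^Y(x)\le x^{k-1}\le(x')^{k-1}\le b(x')$ whenever $x\le x'$. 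This is the monotonicity condition required by Proposition~\ref{thm_comparison}.

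For the killing, the $(b,\mathbf D)$-BBM formalism only fixes the mass $\binom{i-1}{k-2}/\binom{j-1}{k-1}$ that $\mathbf D^Y_{j-1}$ places on each half-open slot $[\tfrac{i-1}{j-1},\tfrac{i}{j-1})$, leaving me free to distribute that mass within the slot. I will exploit this freedom to push each slot's mass to the right as far as the inequality $\mathbf D^Y_{j-1}(x)\le x^{k-1}$ permits; the bound $F_j(i)\le(i/(j-1))^{k-1}$ is precisely what makes such a placement feasible, and for $i<j-1$ a single atom at $\max\{F_j(i)^{1/(k-1)},\tfrac{i-1}{j-1}\}$ does the job. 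Combined with $D\ge x^{k-1}$, this yields $\mathbf D^Y_j\le D$ pointwise for every $j$, and Proposition~\ref{thm_comparison} then delivers the desired conclusion $Y^N\le_{st}X^N$.

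The main subtlety lies in the rightmost slot $[\tfrac{j-2}{j-1},1)$, where the bound on $F_j$ saturates at $i=j-1$ and so the single-atom recipe would place the atom at $1$, outside the allowed support $[0,1)$. I will handle this boundary case by splitting the mass between an atom at $\tfrac{j-2}{j-1}$ of size $(\tfrac{j-2}{j-1})^{k-1}-F_j(j-2)$ and an absolutely continuous tail of density $(k-1)x^{k-2}$ on $(\tfrac{j-2}{j-1},1)$, so that $\mathbf D^Y_{j-1}$ coincides with $x^{k-1}$ on this last slot and the pointwise bound is preserved.
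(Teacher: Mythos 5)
Your proposal takes the same route as the paper: realize $Y^N$ as a $(b^Y,\mathbf{D}^Y)$-BBM, compute the branching rates $\lambda_j=\prod_{r=1}^{k-1}\frac{j-r}{N-r}$ and killing CDF values $q_{ij}=\binom{i}{k-1}/\binom{j-1}{k-1}$, establish $\lambda_j\le(\tfrac{j-1}{N-1})^{k-1}$ and $q_{ij}\le(\tfrac{i}{j-1})^{k-1}$, and feed the result to Proposition~\ref{thm_comparison}. Where you go beyond the paper is in the construction of $\mathbf{D}^Y$. You correctly observe that the $(b,\mathbf D)$-BBM mechanism only pins down the mass $q_{ij}-q_{i-1,j}$ on each slot $[\tfrac{i-1}{j-1},\tfrac{i}{j-1})$, and that one is free to place that mass anywhere inside the slot; exploiting this is exactly what is needed to make the domination $D^Y_{j-1}\le x^{k-1}$ work, and the split atom-plus-density on the last slot $[\tfrac{j-2}{j-1},1)$ handles the only slot where the bound $q_{ij}\le(\tfrac{i}{j-1})^{k-1}$ saturates. (The paper's written formula $\hat D_{j-1}(x)=\mathbbm{1}\{x\ge1\}+\sum_{i=k-1}^{j-2}q_{ij}\mathbbm{1}\{\tfrac{i}{j-1}\le x<\tfrac{i+1}{j-1}\}$ dodges this boundary issue, but only because its atoms sit one grid position too far to the right: under the paper's own killing convention it assigns to quantile $i$ the probability $q_{i-1,j}-q_{i-2,j}$ rather than $q_{ij}-q_{i-1,j}$, and in particular leaves total killing probability $q_{j-2,j}<1$, so it does not faithfully reproduce $Y^N$. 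Your construction is the one that actually makes the identification $Y^N=(b^Y,\mathbf{D}^Y)$-BBM literally true.)

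There is one small gap in your atom recipe. For an interior slot $i<j-1$ you place the atom at $\max\{F_j(i)^{1/(k-1)},\tfrac{i-1}{j-1}\}$ and assert it lies in $[\tfrac{i-1}{j-1},\tfrac{i}{j-1})$. This uses the strict inequality $q_{ij}<(\tfrac{i}{j-1})^{k-1}$, which holds when $k\ge3$ (at least one factor with $r\ge1$ is strictly smaller), but fails when $k=2$: there $q_{ij}=\tfrac{i}{j-1}$, so the prescribed location is the excluded right endpoint of the slot. Since the proposition is stated for all $k\ge2$, this case must be covered. The easiest fix is to not insist on an atom: on slot $i$ simply set $D^Y_{j-1}(x)=\min\{x^{k-1},q_{ij}\}$ (equivalently, spread the slot mass so that the CDF rides along $x^{k-1}$ until it hits $q_{ij}$ and is flat afterwards); this yields $D^Y_{j-1}\le x^{k-1}$, assigns the right mass to each slot thanks to $q_{ij}\le(\tfrac{i}{j-1})^{k-1}$, handles $k=2$ and $i=j-1$ uniformly, and makes the separate treatment of the last slot unnecessary. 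With that amendment the argument is complete.
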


\begin{proof}
	The key observation is that
	$ Y^{N} $ can be thought as a $ (\hat b,\hat{\mathbf{D}}) $-BBM with the proper choice of $ \hat b $ and $ \hat{\mathbf{D}} $. Observe that in the $(N,p)$-BBM, all the quantiles with $ j\le k-1 $ do not branch.
	If $ j\ge k $, in order to have a branch at quantile $j$ we need to choose a $k$-tuple such that $j$ is the largest quantile in it. Hence its branching rate is given by
	\begin{align*}
	\frac{N}{k}\,\frac{\binom{j-1}{k-1}}{\binom{N}{k}}=
	\frac{j-1}{N-1} \, \frac{j-2}{N-2} \, \ldots \,
	\frac{j-(k-1)}{N-(k-1)}
	\eqqcolon \lambda_{j}.
	\end{align*}
	
	Given the event that quantile $ j\ge k $ has a branch, the probability of a quantile smaller or equal than $i \in \{k-1,\dots,j-1\}$ being killed is
	\begin{align*}
	\frac{\binom{i}{k-1}}{\binom{j-1}{k-1}}
	=\frac{i}{j-1}\,
	\frac{i-1}{j-2}\,
	\ldots
	\, \frac{i-(k-2)}{j-(k-1)}
	\eqqcolon q_{ij},
	\end{align*}
	and zero if $ i<k-1 $.
	Define
	\begin{align*}
	\hat b(x)=
	\lambda_N \mathbbm{1}\{ x=1 \}+\sum_{j=k}^{N-1} \lambda_j \mathbbm{1}\{ \tfrac{j-1}{N-1}\le x<\tfrac{j}{N-1} \}
	\end{align*}
	and
	\begin{align*}
	{\hat D_{j-1}}(x)=\mathbbm{1}\{x\ge 1 \}+\sum_{i=k-1}^{j-2} q_{ij} \mathbbm{1}\{ \tfrac{i}{j-1}\le x<\tfrac{i+1}{j-1} \}.
	\end{align*}
	Under these definitions, it is easy to check that $ Y^{N} $ is a $ (\hat b,\hat{\mathbf{D}}) $-BBM with $\hat{\mathbf{D}} = (\hat D_j)_{j\in\BB N}$. Since
	$\lambda_j\le (  \frac{j-1}{N-1} ) ^{k-1}$ and $ q_{ij}\le (\frac{i}{j-1})^{k-1}$, we get $ {\hat D_i}(x) \le x^{k-1} \le D(x)$ for $0\le x \le 1$,
	and $\hat b(\hat x) \le \hat x^{k-1} \le x^{k-1} \le b(x)$ for $ 0\le \hat x\le x\le 1 $. We can apply Proposition \ref{thm_comparison} to conclude.
\end{proof}

\section{Hydrodynamics for the \texorpdfstring{$ (N,p) $}{(N,p)}-BBM}\label{sctn_hydro}
We now prove Theorem \ref{thm_hydro}. We stress that the time $ t>0 $ is fixed during all the proof.
In this section we use $ F^N $ for,
\begin{align*}
F^N(t,x)= \frac{1}{N}\sum_{i=1}^{N} \mathbbm{1}\{Y^N_t(i) \le x\}.
\end{align*}
Since $ U(t,\cdot) $ is continuous, convergence \eqref{reposera} is equivalent to 
\begin{align*}
\lim_{N\to\infty}  F^N(t,x)=U(t,x) \qquad\mbox{in probability}
\end{align*}
for every $ x\in \BB R $. For $ \varepsilon>0 $, we have
\begin{align}\label{carmen}
\BB P (|F^N(t,x)-U(t,x) |>\varepsilon)
=
\int \nu^N(\mathrm{d}\zeta)
\BB P_\zeta(|  F^N(t,x)-U(t,x) |>\varepsilon),
\end{align}
where $ \nu^N $ is the distribution of $ Y^N_0 $ on $ [-\infty,\infty)^N $
and $ \BB P_\zeta(\cdot)= \BB P(\cdot|Y^N_0=\zeta)$.
Recall the definition of $ h_p $ given in \eqref{defi0002}. 
For every $ \zeta\in [-\infty,\infty)^N $,
let $ U_{\zeta} $ be the unique bounded solution to
\begin{align}\nonumber
&\partial_t U_{\zeta}=\tfrac{1}{2}\partial_{xx}U_{\zeta}- h_p(U_{\zeta}),
\\
\nonumber
&U_{\zeta}(0,\cdot)=F_\zeta,
\end{align}
being $ F_\zeta $ the distribution function of the empirical probability of $ \zeta $.
Splitting into the cases $ |U_\zeta(t,x)-U(t,x)|>\tfrac{\varepsilon}{2} $
and $ |U_\zeta(t,x)-U(t,x)|\le\tfrac{\varepsilon}{2} $,
\eqref{carmen} can be bounded from above by
\begin{align}\label{astro}
\nonumber 
&\int \nu^N(\mathrm{d} \zeta)
\BB P_\zeta(|F^N(t,x)  -U_\zeta(t,x)|>\tfrac{\varepsilon}{2})\\
&\quad+
\int \nu^N(\mathrm{d} \zeta)\mathbbm{1}\{|U_{\zeta}(t,x)-U(t,x)|>\tfrac{\varepsilon}{2}\}.
\end{align}
The second term in \eqref{astro}
vanishes in the limit as $ N\to\infty $ due to our assumptions and Theorem \ref{thm_quinoto}.
For $ (s,y)\in [0,\infty)\times\BB R $, let $ U^N_\zeta(s,y)=\BB E_\zeta(F^N(s,y)) $, being $ \BB E_\zeta $ the expectation with respect to $ \BB P_\zeta $.
Splitting  into the cases 
$ |F^N(t,x)-U^N_\zeta(t,x)|>\frac{\varepsilon}{4} $
and
$ |F^N(t,x)-U^N_\zeta(t,x)|\le \frac{\varepsilon}{4} $,
and using Tchebyshev's inequality,
the first term in \eqref{astro} can be bounded by
\begin{align}\label{caberna}
\nonumber &\tfrac{16}{\varepsilon^2}\int\nu^N(\mathrm{d}\zeta)\,
[\BB E_\zeta(F^N(t,x)^2)  -U^N_\zeta(t,x)^2]\\
&\quad+
\int\nu^N(\mathrm{d}\zeta)
\mathbbm{1} \{| U_\zeta^N(t,x)-U_\zeta(t,x) | > \tfrac{\varepsilon}{4}\}.
\end{align}
The first term in this expression vanishes  in the limit as $ N\to\infty $ due to the next result and the dominated convergence theorem.

\begin{lemma}[Propagation of Chaos]\label{caramelo}
	For every $ t\ge 0 $ and $ \ell\in\BB N $ there is a constant $C>0$ such that,
	\begin{align}\nn
	\sup_{\zeta\in [-\infty,\infty)^N}
	\, \sup_{(s,x)\in [0, t]\times\BB R}
	|\BB E_\zeta(F^N(s,x)^\ell)-U^N_\zeta(s,x)^\ell|
	\le \frac{C}{N}.
	\end{align}
\end{lemma}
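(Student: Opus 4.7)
The plan is to reduce the general-$\ell$ estimate to an $O(1/N)$ variance bound via a binomial expansion, and then prove the variance bound by a Gronwall argument on the family of pair covariances.

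For the reduction, set $\mu:=U^N_\zeta(s,x)$ and use the binomial theorem together with $\mathbb{E}_\zeta[F^N(s,x)-\mu]=0$ to obtain
$$\mathbb{E}_\zeta[F^N(s,x)^\ell] - \mu^\ell \;=\; \sum_{k=2}^{\ell}\binom{\ell}{k}\mu^{\ell-k}\,\mathbb{E}_\zeta\bigl[(F^N(s,x)-\mu)^k\bigr].$$
Since $F^N(s,x),\mu\in[0,1]$, one has $|F^N(s,x)-\mu|^k\le (F^N(s,x)-\mu)^2$ for every $k\ge 2$, so
$$\bigl|\mathbb{E}_\zeta[F^N(s,x)^\ell] - U^N_\zeta(s,x)^\ell\bigr| \;\le\; 2^\ell\,\mathrm{Var}_\zeta(F^N(s,x)),$$
and the problem reduces to showing $\mathrm{Var}_\zeta(F^N(s,x))\le C/N$ uniformly over $(s,x,\zeta)\in [0,t]\times\mathbb{R}\times[-\infty,\infty)^N$.

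For the variance bound, the diagonal contribution $N^{-2}\sum_i\mathrm{Var}_\zeta(\mathbbm{1}\{Y^N_s(i)\le x\})$ is at most $1/(4N)$. For the off-diagonal part, let $c_{ij}(s,x):=\mathrm{Cov}_\zeta(\mathbbm{1}\{Y^N_s(i)\le x\},\mathbbm{1}\{Y^N_s(j)\le x\})$; since the initial condition is deterministic, $c_{ij}(0,x)=0$, and I would control $c_{ij}$ by Dynkin's formula applied to the two-label marginal. The Brownian motions of $i$ and $j$ act independently and produce no correlation. Rearrangement events in which the uniformly chosen $k$-tuple contains both labels $i$ and $j$ occur at the low rate $\lambda N\binom{N-2}{k-2}/\binom{N}{k}=O(1/N)$ and contribute an $O(1/N)$ source term directly. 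Events in which only one of the two labels, say $i$, belongs to the tuple occur at rate $O(1)$ and replace $Y^N_s(i)$ by the position of a third label $\ell$ chosen uniformly from $\{1,\ldots,N\}\setminus\{i,j\}$; their contribution to $\partial_s c_{ij}$ is bounded by a constant times $\sup_{\ell\ne j}|c_{\ell j}|$, and symmetrically for jumps involving $j$. Setting $M(s):=\sup_{i\ne j,\,x}|c_{ij}(s,x)|$ yields a differential inequality $M(s)\le C_1\int_0^s M(r)\,dr + C_2 s/N$, and Gronwall gives $M(s)\le (C_2 s/N)\exp(C_1 s)\le C/N$ on $[0,t]$.

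The main obstacle is to rigorously extract from the action of the generator on $\mathbb{E}_\zeta[\mathbbm{1}\{Y^N_s(i)\le x\}\mathbbm{1}\{Y^N_s(j)\le x\}]$ only the terms involving other pair covariances, after cancelling the factorized contributions against the analogous decomposition of $U^N_\zeta(s,x)^2$. This bookkeeping is where the particular structure of the $(N,p)$-BBM (uniform $k$-tuple selection with $k$ fixed) is essential: it ensures that the linear operator acting on the family $(c_{ij})$ has norm bounded independently of $N$, which is precisely what the Gronwall step requires.
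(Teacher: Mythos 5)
Your binomial reduction of the $\ell$-th moment to the variance is clean and correct; it is an elementary alternative to the paper's decomposition into sums over tuples of distinct and repeated labels. The trouble is the variance bound itself, which is where all the difficulty in the lemma lives.

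Your Gronwall argument on the pair covariances $c_{ij}$ does not close, for two related reasons. First, the claim that a jump event in which $i$ belongs to the chosen tuple but $j$ does not ``replaces $Y^N_s(i)$ by the position of a third label $\ell$ chosen uniformly'' is false: in the $(N,p)$-BBM, whether $i$ jumps at all, and onto which label it jumps, are determined by the \emph{ranks} of the $k$ chosen particles. Thus particle $i$ jumps only when it is the $a$-th ranked among the $k$ labels in $S$, and the target is the $b$-th ranked, both events depending on the full configuration $Y^N_s|_S$. Second, and more fundamentally, when you compute $\partial_s c_{ij}$ by applying the generator to $\mathbbm{1}_i\mathbbm{1}_j$ and subtracting the analogous expression for $\mathbb{E}[\mathbbm{1}_i]\,\mathbb{E}[\mathbbm{1}_j]$, the jump contribution (for a tuple $S\ni i$, $S\not\ni j$) has the form
\[
\lambda N \int \mathrm{d} S\ \mathbbm{1}\{i\in S,\ j\notin S\}\ \mathrm{Cov}_\zeta\!\left(\Psi_S\big(Y^N_s|_S\big),\ \mathbbm{1}_j\right),
\]
where $\Psi_S$ encodes the rank indicators $\mathbbm{1}\{\sigma_S(a)=i\}$ and the indicator of the target position, and is genuinely a function of all $k$ positions $Y^N_s|_S$. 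This is a $(k{+}1)$-point correlation, not a pair covariance, and it is not in general bounded by $\sup_{\ell\neq j}|c_{\ell j}|$ — a function of $k$ particles can be correlated with $\mathbbm{1}_j$ even when every pairwise covariance vanishes. So the differential inequality you write for $M(s)$ is not actually implied by the generator computation. If you try to repair this by enlarging $M(s)$ to a supremum of covariances over functions of disjoint sets, the Gronwall coefficient then grows with the set sizes, and you are forced to track how the supports spread under the dynamics — which is precisely the clan-of-ancestors bookkeeping. The paper's proof does exactly this: it constructs, via a time-reversed graphical representation, the (random) set $\psi_t(j)$ of labels whose Brownian paths or Poisson marks can influence $Y^N_t(j)$; conditionally on the clans of $i$ and $j$ being disjoint, the two indicators factorize exactly; and the probability of intersection is bounded by $O(1/N)$ using the first-moment estimate $\mathbb{E}|\psi_t(j)|\le e^{\lambda k(k-1)t}$. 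This avoids any attempt at a closed moment hierarchy. I suggest you keep your binomial reduction as a clean framing, but replace the Gronwall step by the clan-of-ancestors estimate on $\mathrm{Cov}_\zeta(\mathbbm{1}_i,\mathbbm{1}_j)$.
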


We prove this result in Appendix \ref{proof_correlations}.
We now turn to control the second term in \eqref{caberna}.
Recall the definition of $ \hat p(r) $ given in \eqref{p_hat}.

\begin{lemma}\label{thm_finite_hydro}
	Let $ h^N_{p}:[0,1]\to \BB R $ be the function defined by
	\begin{align*}
	h^N_{p}(u)=
	\lambda \sum_{r=1}^{k-1} \hat p(r)\binom{k}{r}
	w^N_{r}(u),
	\end{align*}
	with
	\begin{align*}
	w^N_{r}(u)
	=
	\left[ \prod_{\ell=0}^{r-1}  (u-\tfrac{\ell}{N}) \right]
	\left[ \prod_{\ell=0}^{(k-r)-1}  (1-u-\tfrac{\ell}{N}) \right]
	\left[\prod_{\ell=0}^{k-1}  \tfrac{N}{N-\ell}\right].
	\end{align*}
	Then, for every $ \zeta\in [-\infty,\infty)^N $,
	$ U^N_{\zeta}$ verifies
	\begin{align}\label{kiwi}
	&\partial_t U^N_{\zeta} =\tfrac{1}{2}\partial_{xx} U^N_{\zeta}- \BB E_\zeta[ h^N_p(F^N)],
	\\
	\nonumber	&U^N_{\zeta}(0,\cdot)=F_\zeta.
	\end{align}
\end{lemma}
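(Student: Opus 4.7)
The plan is to read \eqref{kiwi} as the Kolmogorov forward equation for the Markov process $Y^N$ applied to the test function $g_x\colon\zeta'\mapsto F_{\zeta'}(x)$ for fixed $x\in\BB R$. Because $g_x$ is not smooth in $\zeta'$, I would first mollify: pick a smooth nonincreasing $\psi^\varepsilon\colon\BB R\to[0,1]$ approximating $\mathbbm{1}\{\cdot\le 0\}$, set $g_x^\varepsilon(\zeta')=\tfrac{1}{N}\sum_{i=1}^N\psi^\varepsilon(\zeta'(i)-x)$, apply Dynkin's formula to $g_x^\varepsilon$, and split the generator as $L=L_{BM}+L_{jump}$. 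The Brownian piece is immediate: $\partial^2_{\zeta'(i)}\psi^\varepsilon(\zeta'(i)-x)=\partial_{xx}\psi^\varepsilon(\zeta'(i)-x)$ gives $L_{BM}g_x^\varepsilon(\zeta')=\tfrac12\partial_{xx}g_x^\varepsilon(\zeta')$, which upon taking $\BB E_\zeta$ and letting $\varepsilon\to 0$ produces the Laplacian term $\tfrac12\partial_{xx}U^N_\zeta(t,x)$.

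The jump piece is where the explicit formula for $h^N_p$ emerges. Fix $\zeta'$ and set $n:=NF_{\zeta'}(x)$, the number of particles of $\zeta'$ at or to the left of $x$. A jump event picks a uniform $k$-subset of quantiles $\{\ell_1<\cdots<\ell_k\}\subset\{1,\dots,N\}$ together with a pair $(i,j)$ with $i<j$ sampled from $p$, and moves the particle with quantile $\ell_i$ on top of the one with quantile $\ell_j$. Since $i<j$ forces $\zeta'[\ell_i]\le\zeta'[\ell_j]$, the only way $F_{\zeta'}(x)$ changes is that $\zeta'[\ell_i]\le x<\zeta'[\ell_j]$, i.e.\ $\ell_i\le n<\ell_j$, and in this case $F_{\zeta'}(x)$ drops by $1/N$. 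Letting $S$ be the (hypergeometric) number of elements of the chosen $k$-subset lying in $\{1,\dots,n\}$, this event is exactly $\{i\le S<j\}$; averaging over $(i,j)\sim p$ produces $\hat p(S)$. Multiplying the per-jump decrement $-1/N$ by the total jump rate $\lambda N$ gives
\begin{align*}
L_{jump}g_x(\zeta') \;=\; -\lambda\sum_{s=1}^{k-1}\hat p(s)\,\frac{\binom{n}{s}\binom{N-n}{k-s}}{\binom{N}{k}}.
\end{align*}

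With $u=n/N=F_{\zeta'}(x)$, the elementary factorisation
\begin{align*}
\frac{\binom{n}{s}\binom{N-n}{k-s}}{\binom{N}{k}}=\binom{k}{s}\Bigg[\prod_{\ell=0}^{s-1}\Big(u-\tfrac{\ell}{N}\Big)\Bigg]\Bigg[\prod_{\ell=0}^{k-s-1}\Big(1-u-\tfrac{\ell}{N}\Big)\Bigg]\Bigg[\prod_{\ell=0}^{k-1}\tfrac{N}{N-\ell}\Bigg]=\binom{k}{s}w^N_s(u)
\end{align*}
identifies the jump contribution as $-h^N_p(F_{\zeta'}(x))$ (matching the summation index $s$ with the $r$ in the statement). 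Summing the Brownian and jump contributions, taking $\BB E_\zeta$, and passing $\varepsilon\to 0$ yields \eqref{kiwi}; the initial condition $U^N_\zeta(0,\cdot)=F_\zeta$ is immediate from the definition of $g_x$. The only real obstacle I anticipate is justifying the mollification limit --- i.e.\ the $C^2$ regularity in $x$ of $U^N_\zeta(t,\cdot)$ for $t>0$ together with the interchange $\partial_{xx}\BB E_\zeta=\BB E_\zeta\partial_{xx}$ --- but this is standard since, conditionally on the (a.s.\ discrete) jump times, each marginal $Y^N_t(i)$ is a Gaussian convolution of a point mass, so the required smoothness in $x$ is inherited by $U^N_\zeta$.
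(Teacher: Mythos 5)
Your proof is correct and arrives at the same answer, but it takes a genuinely different route from the paper. You use the generator/Dynkin approach: mollify the test function $\zeta'\mapsto F_{\zeta'}(x)$, split the generator into Brownian and jump parts, compute $L_{\mathrm{jump}}$ directly on the empirical distribution function, and identify the hypergeometric jump rate in one global computation. The paper instead works particle by particle: it introduces $q_\ell(s,x)=\BB P_\zeta[Y^N_s(\ell)\le x]$, conditions on the last jump time before $s$ to obtain a Duhamel (Green-kernel) integral representation, reads off a linear PDE for each $q_\ell$ with a source $g_\ell$, averages over $\ell$, and only then performs the combinatorial analysis that turns $\frac{1}{N}\sum_\ell g_\ell$ into the hypergeometric expression. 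The combinatorial core --- that the per-jump decrement occurs iff $i\le S<j$ for the hypergeometric count $S$, averaging to $\hat p(S)$, followed by the identity $\binom{m}{r}\binom{N-m}{k-r}/\binom{N}{k}=\binom{k}{r}w^N_r(m/N)$ --- is identical in both. What your route buys is a shorter, more conceptual computation that does not require decomposing by particle; what the paper's route buys is that smoothness of $U^N_\zeta(s,\cdot)$ for $s>0$ and the validity of the PDE come for free from convolution with the Green kernel $e^{-\lambda Ns}\Phi(s,\cdot)$, whereas in your argument the interchange $\partial_{xx}\BB E_\zeta=\BB E_\zeta\partial_{xx}$ and the $\varepsilon\to 0$ passage under the time integral in Dynkin's formula are genuine points that you correctly flag but only sketch. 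Your sketch of why $U^N_\zeta(t,\cdot)$ is smooth for $t>0$ is the right idea and would need to be made quantitative (e.g.\ dominated convergence uniformly in $s\in(0,t]$) to be fully rigorous, but there is no gap in the approach.
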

Before proving it, we show how to conclude.
Equation \eqref{kiwi} can be written as
\begin{align*}
\partial_t U_\zeta^N
=\tfrac{1}{2}\partial_{xx}U_\zeta^N
- h_p(U_\zeta^N)
+ \CAL E^N_{1,\zeta}
+ \CAL E^N_{2,\zeta}
\end{align*}
with the errors defined as
\begin{align*}
&\CAL E^N_{1,\zeta}
=h_p(U^N_\zeta)-\BB E_\zeta(h_p(F^N))
\\ \nn
&\CAL E^N_{2,\zeta}=
\BB E_\zeta(h_p(F^N))-\BB E_\zeta(h^N_p(F^N)).
\end{align*}
The comparison principle Theorem \ref{thm_quinoto} allows us to conclude once we prove that
\begin{align*}
\lim_{N\to\infty}\sup_{(s,y)\in [0,t]\times\BB R}| \CAL E^N_{\ell,\zeta}(s,y)|=0,
\quad \ell=1,2.
\end{align*}
The case $ \ell=1 $ follows by Lemma \ref{caramelo}, while for $ \ell=2 $ the limit holds since
\begin{align*}
\lim_{N\to\infty} \|h^N_p-h_p\|_\infty =0.
\end{align*}
To see why this is true observe that it is enough to prove uniform convergence of each  $ w_{r}^N $, 
namely
\begin{align}
\lim_{N\to\infty}\sup_{u\in [0,1]}|w_r^N(u)-u^r(1-u)^{k-r}|,
\end{align}
that follows because the first $ r $ factors defining $ w_r^N $ uniformly converge to $ u\mapsto u $, the second $ (k-r) $ to $ u\mapsto 1-u $, and the last $ k $ to $ u\mapsto 1 $.
Heuristically, we are approximating the sampling of $ k $ particles without replacement $h_p^N$,  by sampling with replacement $h_p$, which certainly holds in the limit $N\to \infty$.

\begin{proof}[Proof of Lemma \ref{thm_finite_hydro}]
	\label{zebra}
	Fix $ s\in (0,t] $ and, for  $ \ell\in\{ 1,\ldots,N \} $, call 
	\begin{align}\nonumber
	q_{\ell}(s,x)= \BB P_\zeta[Y^N_{s}(\ell)\le x].
	\end{align}
	We consider the cases $  T^N_1 > s   $ and $ T^N_1 \le s  $ (recall the graphical construction of Section \ref{sec_scaling_limit}) to get
	\begin{align}\label{yayo}
	\nonumber q_{\ell}(s,x)
	=&
	\, e^{-\lambda N s}\int_{-\infty}^\infty
	\Phi(s, x-y)  \mathbbm{1}\{ y\ge \zeta(\ell) \}\mathrm{d} y
	\\ & +\BB E_\zeta\big[\mathbbm{1}\{ Y^N_{s}(\ell)\le x \}\mathbbm{1}\{T^N_1\le s\}\big].
	\end{align}
	Here $ \Phi $ is the Gaussian kernel
	\begin{align}\label{cuadrito}
	\Phi(s,z)=\frac{1}{\sqrt{2\pi s}}e^{-\frac{z^2}{2s}}.
	\end{align}
	Let $ T^N_* $ be the last jump before $ s $. We use total probability conditioning first on $ T^N_*$, that  has densisty $ \lambda N e^{-\lambda N(s-r)}\mathbbm{1}\{ 0<r<s\} $ on the event $ T_1^N\le s $
	(the law of $ s-T_*^N $ on $ T_1^N\le s $ is exponential with rate $ \lambda N $ truncated to $ (0,s) $), and then on $x-  [B^N_s(\ell)-B^N_{T^N_*}(\ell)] $, whose law conditioned to $T^N_*=r$ is $ \Phi(s- r ,x-y)\mathrm{d} y $. The second term in the right-hand side of\eqref{yayo} can be written as
	\begin{align}\label{alacran}
	\int_0^s \lambda N e^{-\lambda N (s-r) } {\mathsf g}(r)\mathrm{d}  r,
	\end{align}
	where $ \mathsf g(r) $ is defined by formula
	\[
	\int_{-\infty}^\infty
	\Phi(s- r ,x-y)\BB P_\zeta(  Y^N_{s}(\ell)\le x  | T^N_*= r ,B^N_s(\ell)-B^N_r(\ell)=x-y ) \mathrm{d} y.
	\]
	
	Observe that
	\begin{align*}
	&\BB P_\zeta (  Y^N_{s}(\ell)\le x  | T^N_*= r ,B_s^N(\ell)-B_r^N(\ell)=x-y )
	\\
	&\quad =
	\int \mathrm{d} S \sum_{1\le i<j\le k} p(i,j) \,\BB P_\zeta [ \Gamma_{S(i,j)}(  Y^N_{ r })(\ell)\le y ]=:g_\ell( r ,y) ,
	\end{align*}
	where $ \mathrm{d} S $ is the law of a $ k $-tuple uniformly chosen at random (for a $ k $-tuple $ S $, recall the definition of $ S(i,j) $ given at the end of Section \ref{sec_scaling_limit}:
	if $S=\{\ell_1,\ldots,\ell_k\}$ with $ \ell_1<\ldots<\ell_k $, then
	$S(i,j) = (\ell_{i},\ell_{j})$).
	Plugging-in \eqref{alacran}, we obtain 
	\begin{align}\nonumber
	q_\ell(s,x)=&
	\int_{-\infty}^\infty G(s,x-y)  \mathbbm{1}\{ y\ge \zeta(\ell) \}\mathrm{d} y\\
	\nonumber &  +\int_0^s \int_{-\infty}^\infty G(s- r ,x-y)\lambda N g_\ell( r ,y) \mathrm{d} y\mathrm{d}  r ,
	\end{align}
	where $ G(r,z)= e^{-\lambda N r}\Phi(r,z) $
	is the Green kernel associated to equation
	\begin{align}\nonumber
	\partial_t V=\tfrac{1}{2}\partial_{xx}V-\lambda N V.
	\end{align}
	Since $ s\in (0,t] $ is arbitrary, we conclude (see Appendix \ref{appendix1}) that $ q_\ell $ solves
	\begin{align}\nonumber
	&\partial_tq_\ell=\tfrac{1}{2}\partial_{xx}q_\ell-\lambda N (q_\ell- g_\ell),
	\\
	\nonumber
	&q_\ell(0,x)=\mathbbm{1}\{ x\ge \zeta(\ell) \}.
	\end{align}
	Summing over $ \ell\in\{ 1,\ldots,N \} $ and dividing by $ N $,
	we get
	\begin{align}\label{traviata}
	&\partial_t U^N_{\zeta} =\tfrac{1}{2}\partial_{xx} U^N_{\zeta}
	-\lambda N\Big(U^N_{\zeta}
	-\frac{1}{N}\sum_{\ell=1}^N g_\ell\Big),
	\\
	\nonumber &U^N_{\zeta}(0,\cdot)=F_\zeta.
	\end{align}
	Observe that
	\begin{align*}
	\frac{1}{N}\sum_{\ell=1}^N
	g_\ell(s,x)=
	\int \mathrm{d} S \sum_{1\le i <j\le k} p(i,j) \, \BB E_\zeta[ F_{\Gamma_{S(i,j)}(Y^N_s)}(x) ].
	\end{align*}
	For fixed $ S $, $ (i,j) $ and writing $F_{ij}(x)$ for $F_{\Gamma_{S(i,j)}(Y^N_s)}(x)$, we have
	\begin{align*}
	\BB E_\zeta[ F_{ij}(x) ]
	=\sum_{m=0}^N\BB E_\zeta [F_{ij}(x)|
	F^N(s,x)
	=\tfrac{m}{N} ]
	\,\BB P_\zeta[F^N(s,x)= \tfrac{m}{N} ].
	\end{align*}
	On the event $ F^N(s,x)=\frac{m}{N} $,
	we have $F_{ij}(x)=\frac{m-1}{N} $  if a particle jumps over $ x $, and $F_{ij}(x)=\frac{m}{N} $ otherwise.
	Then
	\begin{align*}\nonumber
	&\int \mathrm{d} S \sum_{1\le i < j \le k} p(i,j) \,
	\BB E_\zeta[F_{ \Gamma_{S(i,j)}(Y^N_{s}) }(x)|
	F^N(s,x)= \tfrac{m}{N} ]
	\\
	&\quad =\frac{m-1}{N}p_{N,m}+\frac{m}{N}(1-p_{N,m})=\frac{m}{N}-\frac{p_{N,m}}{N},
	\end{align*}
	where
	\begin{align*}\nonumber
	p_{N,m}= \sum_{r=0}^k \frac{\binom{m}{r}\binom{N-m}{k-r}}{\binom{N}{k}}
	\,\hat p(r)
	\end{align*}
	is the probability of such a jump ($ \binom{a}{b} $ is assumed to be zero for $ a<b $). Then
	\begin{align}\nonumber
	\frac{1}{N}\sum_{\ell=1}^N
	g_\ell(s,x)
	&=\sum_{m=0}^N\frac{m}{N}\BB P_\zeta [F^N(s,x)= \tfrac{m}{N} ]
	-\sum_{m=0}^N \frac{p_{N,m}}{N}\BB P_\zeta[F^N(s,x)= \tfrac{m}{N} ]
	\\ \nonumber
	&= U^N_{\zeta}(s,x)-\sum_{m=0}^N \frac{p_{N,m}}{N}\BB P_\zeta[F^N(s,x)= \tfrac{m}{N} ].
	\end{align}
	Then the second term in the right-hand side of \eqref{traviata} can be written as
	\begin{align*}
	\lambda\sum_{m=0}^N p_{N,m} \BB P_\zeta[F^N(s,x)=\tfrac{m}{N}].
	\end{align*}
	We can conclude if we prove that $ \lambda p_{N,m} = h_p^N(\tfrac{m}{N}) $.
	After a trivial comparison of terms in the corresponding sums,
	this follows from
	\begin{align}\label{polemic}
	\frac{\binom{m}{r}\binom{N-m}{k-r}}{\binom{N}{k}}
	=\binom{k}{r}w_r^N\Big(\frac{m}{N}\Big)
	\end{align}
	for every $ r $. This equality holds since
	\begin{align}
	\binom{k}{r}w_r^N\Big(\frac{m}{N}\Big) = \binom{k}{r}\frac{\frac{m!}{(m-r)!}\frac{(N-m)!}{((N-m)-(k-r))!}}{\frac{N!}{(N-k)!}} = \frac{\frac{m!}{(m-r)!r!}\frac{(N-m)!}{((N-m)-(k-r))!(k-r)!}}{\frac{N!}{(N-k)!k!} }  = \frac{\binom{m}{r}\binom{N-m}{k-r}}{\binom{N}{k}}.
	\end{align}
\end{proof}

\section{Limiting velocity for \texorpdfstring{$ (N,p) $}{(N,p)}-BBM}\label{sctn_limiting_velocity_kN}

Take $ k\ge 2 $, $ \lambda>0 $ and $ p=(p(i,j))_{1\le i<j\le k} $ as before,
and let 
\begin{align*}
i_0=\min\{i:\mbox{$ p(i,j)>0 $ for some $ j $} \}-1.
\end{align*}
The role of $ i_0 $ can be informally explained as follows. Consider for a moment
the $ (N,p) $-BBM with no particle located at $ -\infty $. At a jump time, the $ i_0 $ leftmost particles do not jump due to the definition of $ i_0 $. Suppose that, at some point, the particles with quantile no greater than $ i_0+1 $ are far from the cloud of particles. From this time on the particles to the left of the $i_0$ quantile are not likely to perform any jump and hence they can be thought as driftless. On the other hand, the cloud of particles to the right of the $i_0$ quantile has a positive drift induced by the jumps. So, these two clouds are likely to separate from each other as time goes by. For this reason we use below the technical notational trick of assuming that the initial configuration has $ i_0 $ particles located at $ -\infty $ (and hence they remain there forever).

We will construct an auxiliary Markov process 
$ Z^{N}=\{ Z^{N}_t:t\ge 0\} $
with state-space $ \BB R^{N-i_0} $
as a function of the $ (N,p) $-BBM and the initial condition $ Z_0^{N} $. Consider 
the $ (N,p) $-BBM $ Y^N $ with initial condition
\begin{align*}
Y_0^N(i)=
\begin{cases}
-\infty  & \mbox{if $ 1\le i\le i_0 $}
\\
Z_0^N(i-i_0)& \mbox{if $ i_0< i\le N $}
\end{cases},
\end{align*}
and set
\begin{align*}
Z^{N}_t=(Y_t^N(i_0+1),\ldots,Y_t^N(N))
\end{align*}
for $ t>0 $.
That is, $ Z^{N} $ is the projection of $ Y^N $ over the $ N-i_0 $ right-most particles, those that play a meaningful role.
The Markovian property of $ Z^{N} $ follows because, as mentioned before, in the process $ Y^N $, particles with label smaller or equal than $ i_0 $ in $ Y^N $ remain at $ -\infty $ for every time. 
Observe that if there exists $ j $ for which $ p(1,j)>0 $ then
$ Z^{N} $ is simply $ Y^N $.
For fixed $ N $, the process $ Z^N $ has a well defined velocity:

\begin{proposition}\label{caramba}
	For every $ N $, there exists $ w_{N}\in \BB R $ such that,
	for every random initial distribution $ Z_0^{N} \in\BB R^{N-i_0}$, 
	the limits
	\begin{align*}
	\lim_{t\to\infty}\frac{1}{t}Z^{N}_t[1]=
	\lim_{t\to\infty}\frac{1}{t}Z^N_t[N-i_0]=
	\lim_{t\to\infty}\frac{1}{t}Y^N_t[N]=
	w_{N}
	\end{align*}
	hold a.s. and in $ L^1 $.
\end{proposition}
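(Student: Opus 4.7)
The strategy is to reduce the claim to an ergodicity statement for the process as seen from a reference point, and then read off the velocity from the drift of the centre of mass. Define $\bar Z^N_t := (N-i_0)^{-1}\sum_{i=1}^{N-i_0} Z^N_t(i)$ and the centred configuration $\eta^N_t := Z^N_t - \bar Z^N_t\cdot(1,\ldots,1)$, which takes values in the hyperplane $\mathcal S := \{x\in\mathbb R^{N-i_0}:\sum_i x(i)=0\}$. Because both the Brownian dynamics and the jump rule are invariant under simultaneous translations of all particles, $\eta^N$ is a time-homogeneous Markov process on $\mathcal S$, independent of the absolute position encoded by $\bar Z^N$.

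The main step is to establish that $\eta^N$ is positive Harris recurrent with a unique invariant probability measure $\pi_N$ admitting polynomial moments of all orders. I would use the Lyapunov function $V(\eta):=\eta[N-i_0]-\eta[1]$, the spread of the configuration. Between jumps $V$ is dominated by the fluctuations of $N-i_0$ independent Brownian motions, while at each jump event (Poisson rate $\lambda N$) the jumper has quantile index $\geq i_0+1$ and lands on a strictly larger quantile, so $V$ never increases at jumps. Moreover, by the definition of $i_0$, a positive fraction of the jump events selects the current leftmost particle as the jumper, and when the spread is large these events contract $V$ by an amount comparable to $V$ itself. This yields a Foster--Lyapunov drift inequality of the form $\mathcal L V\leq -cV+C$ outside a compact set. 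Combined with a small-set condition provided by the nondegenerate Brownian noise, standard continuous-time Meyn--Tweedie theory then delivers geometric ergodicity and $\int V^p\,d\pi_N<\infty$ for every $p\geq 1$.

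From the graphical construction one decomposes
\[
\bar Z^N_t=\bar Z^N_0+\frac{W_t}{\sqrt{N-i_0}}+J_t,
\]
where $W$ is a standard Brownian motion produced by averaging the independent Brownian increments of the labels, and $J_t$ is the accumulated jump part whose increment at a jump time $s$ equals $(N-i_0)^{-1}(Z^N_{s-}[\ell^s_j-i_0]-Z^N_{s-}[\ell^s_i-i_0])\geq 0$ and depends only on $\eta^N_{s-}$ and the Poisson marks. Clearly $t^{-1}W_t\to 0$ a.s.\ and in $L^1$. Since $J_t$ is a pure-jump additive functional of the positive Harris recurrent process $\eta^N$ with constant underlying rate $\lambda N$ and $\pi_N$-integrable mean increments, the ergodic theorem for additive functionals of Markov processes yields
\[
\frac{J_t}{t}\longrightarrow w_N:=\lambda N\int_{\mathcal S}\bar\delta(\eta)\,\pi_N(d\eta) \quad \textrm{a.s.\ and in } L^1,
\]
where $\bar\delta(\eta)$ is the conditional expectation of a single jump increment of $\bar Z^N$ started from configuration $\eta$. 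Hence $\bar Z^N_t/t\to w_N$ a.s.\ and in $L^1$.

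For the three quantities appearing in the statement, note that $Z^N_t[1]\leq \bar Z^N_t\leq Z^N_t[N-i_0]=Y^N_t[N]$, while $Z^N_t[N-i_0]-Z^N_t[1]=V(\eta^N_t)$. By ergodicity and the integrability of $V$ under $\pi_N$, a Borel--Cantelli argument on integer times together with tightness in between gives $t^{-1}V(\eta^N_t)\to 0$ a.s.\ and in $L^1$, so the sandwich forces $Z^N_t[1]/t$, $Z^N_t[N-i_0]/t$ and $Y^N_t[N]/t$ to the common deterministic limit $w_N$. The main obstacle is the second paragraph: rigorously verifying the Foster--Lyapunov contraction. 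One needs to account for which uniformly chosen $k$-tuples together with which pairs $(i,j)$ carrying positive mass under $p$ produce a contraction of $V$ of order $V$ itself, and to show that the resulting negative drift uniformly dominates the positive drift contributed by the Brownian fluctuations of the maximum and the minimum. Once ergodicity is in place, the remainder follows from classical results on additive functionals of Markov processes.
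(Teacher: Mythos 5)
Your architecture is genuinely different from the paper's. The paper obtains Proposition~\ref{caramba} directly from Liggett's subadditive ergodic theorem: it observes that running the $(N,p)$-BBM for $m$ jumps, collapsing all particles to the position of the rightmost, and running for $n$ further jumps yields a configuration that dominates (in the sense of $\cle$) the one obtained by running $m+n$ jumps; this monotonicity, supplied by Proposition~\ref{thm_comparison} via the $(b,\mathbf D)$-BBM reformulation, gives subadditivity of the rightmost position and hence the deterministic velocity $w_N$ with no need for ergodicity of the centred process. Your route instead establishes velocity through Harris ergodicity of the translation-invariant process plus the ergodic theorem for additive functionals; that is the strategy of \cite{GJM} and is perfectly legitimate in outline, but it front-loads the harder ergodicity question (which the paper only needs later, in Proposition~\ref{kilo}, to get the velocity \emph{formula}, not its existence).

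There is, however, a concrete gap in the step you yourself flag. The drift inequality $\cL V\le -cV+C$ cannot hold for $V(\eta)=\eta[N-i_0]-\eta[1]$: at a jump event the spread $V$ changes only when the leftmost particle of $Z^N$ jumps, and then the new leftmost is the previous second-leftmost, so $V$ decreases by exactly the first gap $\eta[2]-\eta[1]$, \emph{not} by an amount of order $V$. If $N-i_0-1$ particles sit within distance $1$ of each other and one particle sits at distance $M$ to their right, then $V\approx M$ is arbitrarily large, the Brownian part contributes a bounded positive drift, and every leftmost-jump decreases $V$ by at most $1$; so $\cL V$ stays of order one rather than $-cM$. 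A genuine single-step Foster--Lyapunov contraction therefore fails for this choice of $V$, and the sentence ``these events contract $V$ by an amount comparable to $V$ itself'' is false. To repair this one would need either a different Lyapunov functional that is sensitive to the whole spacing profile (not just the extremes), a multi-step or skeleton-chain drift condition capturing the gradual draining of a left cluster over many jumps, or a coupling argument as in \cite[Theorem~2.3]{GJM}. The subadditivity route the paper takes bypasses this entirely, which is exactly why it is preferred for establishing existence of $w_N$.
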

\begin{proof}
	We omit the details due to the similarity with the proofs of Proposition 2 and Theorem 2(a) in \cite{BG10} and \cite{DR}  respectively.
	It is a consequence of Liggett's subadditive ergodic theorem.
	The key requirement is that, if we run the process until the $ m $-th jumping time $ T_m $,
	restart it with the $ N-i_0 $ particles at the position of the rightmost one,
	and run it until we have another $ n $ extra jumps, then the resulting configuration dominates the configuration we would get by running the process until the $ (m+n)$-th jumping time.
	This requirement if fulfilled by Proposition \ref{thm_comparison} and the fact that the $(N,p)$-BBM is a $ (b,\mathbf{D}) $-BBM.
\end{proof}

We now prove the lower bound for the velocities.
\begin{proposition}\label{uttanasana}
	The limiting velocity $ w_N $ of the right-most particle of the $(N,p)$-BBM satisfies
	\begin{align}\label{mora}
	\liminf_{N\to\infty}w_{N} \ge c^*.
	\end{align}
	Here $ c^*>0 $ is the minimal velocity of equation \eqref{kpp_eq_1}, see Appendix \ref{appendix1}.
\end{proposition}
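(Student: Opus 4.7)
The plan is to follow the strategy of \cite{BCDM+86}: combine the hydrodynamic limit of Theorem~\ref{thm_hydro} with the macroscopic front propagation for the F-KPP equation~\eqref{kpp_eq_1}, and transfer the resulting bulk estimate to the microscopic velocity via Proposition~\ref{caramba}. Fix $c<c^*$; it will suffice to show $w_N\ge c$ for all sufficiently large $N$, and then let $c\uparrow c^*$.

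First, I would consider the $(N,p)$-BBM $Y^N$ started from the configuration $\zeta^N_0$ with $N-i_0$ particles at the origin and the remaining $i_0$ at $-\infty$. Since $F_{\zeta^N_0}$ converges uniformly to $U_0(x)=\mathbbm{1}\{x\ge 0\}$, Theorem~\ref{thm_hydro} gives $\|F_{Y^N_T}-U(T,\cdot)\|_\infty\to 0$ in probability, where $U$ solves~\eqref{kpp_eq_1} with this Heaviside initial datum. Classical F-KPP theory (Appendix~\ref{appendix1}) guarantees $U(T,cT)\to 0$ as $T\to\infty$, so for $\alpha\in(0,1/2)$ fixed and $T=T(c,\alpha)$ large enough that $U(T,cT)<\alpha/2$, for every $N\ge N_0(T,\alpha)$ the event $\mathcal{G}:=\bigl\{\#\{i:Y^N_T(i)>cT\}\ge(1-\alpha)N\bigr\}$ has probability at least $1-\alpha$, and in particular $Y^N_T[N]>cT$ on~$\mathcal{G}$.

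The heart of the argument is a renewal-type iteration. On the event $\mathcal{G}$, Lemma~\ref{domination_properties} gives $\eta^{(1)}\cle Y^N_T$, where $\eta^{(1)}$ consists of $\lceil(1-\alpha)N\rceil$ particles at $cT$ and the rest at $-\infty$. Proposition~\ref{thm_comparison} combined with the translation invariance of the dynamics then couples the continuation of $Y^N$ past time~$T$ with a fresh $(N,p)$-BBM started from $\eta^{(1)}$; its shifted Heaviside-type initial datum still triggers F-KPP propagation at speed $c^*$, so re-applying the preceding paragraph to the restarted process produces an additional displacement $cT$ by time $2T$. Iterating $m$ times yields
\[
\mathbb{P}\bigl(Y^N_{mT}[N]\ge mcT\bigr)\ge 1-\alpha_m(N),
\]
with $\alpha_m(N)\to 0$ as $N\to\infty$ for each fixed~$m$.

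To conclude, I would invoke Proposition~\ref{caramba}: the a.s.\ and $L^1$ convergence $Y^N_t[N]/t\to w_N$ makes $w_N$ deterministic. Together with the trivial lower bound $Y^N_t[N]\ge\max_\ell B^N_t(\ell)$ (each particle only gains from the jumps), which controls the negative part of $Y^N_{mT}[N]$ by $O(\sqrt{mT})$, the previous display gives $\mathbb{E}[Y^N_{mT}[N]]/(mT)\ge c-o(1)$ along a diagonal sequence $(N,m)\to(\infty,\infty)$, whence $w_N\ge c$ for large~$N$ and the claim~\eqref{mora} follows by letting $c\uparrow c^*$. The main obstacle is the iteration: each renewal pushes a fraction~$\alpha$ of particles to~$-\infty$, so sustaining the iteration requires $\alpha=\alpha(N)\to 0$ at a controlled rate together with a quantitative strengthening of Lemma~\ref{caramelo}; the subsequent exchange of the limits $N\to\infty$ and $t\to\infty$ is also delicate and relies on $w_N$ being deterministic, so that positive-probability lower bounds can be upgraded to an almost sure statement.
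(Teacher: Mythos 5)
Your proposal follows a renewal/iteration argument: show that over a finite time horizon $T$ the empirical front advances by $\approx cT$ with high probability (via Theorem~\ref{thm_hydro} and F-KPP front propagation), then bootstrap this via Proposition~\ref{thm_comparison} to accumulate displacement $mcT$ over time $mT$. This is a genuinely different route from the paper's, and you yourself have put your finger on the gap: it does not close.

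The difficulty is structural, not merely technical. Theorem~\ref{thm_hydro} gives convergence in probability without a rate, so the per-step error probability $\alpha(N)$ in each renewal is uncontrolled. The union-bound cost of $m$ iterations is of order $m\,\alpha(N)$, and for \emph{fixed} $N$ this blows up as $m\to\infty$. But $w_N$ is defined through the limit $m\to\infty$ for each fixed $N$ (via Proposition~\ref{caramba}), so a diagonal estimate of the form $\mathbb{E}\bigl[Y^N_{mT}[N]\bigr]/(mT)\ge c-o(1)$ with $(N,m)\to(\infty,\infty)$ jointly does not yield $w_N\ge c$ for any particular $N$. Determinism of $w_N$ helps convert a \emph{fixed-$N$, $m\to\infty$} positive-probability bound into an a.s.\ one, but the iteration never produces such a bound; it produces a \emph{fixed-$m$, $N\to\infty$} bound. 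There is also a secondary issue you allude to: after each renewal a fraction $\alpha$ of the particles is placed at $-\infty$, so the restarted initial datum has a mass $\alpha$ at $-\infty$; the limiting F-KPP initial condition is then $\alpha\,\mathbbm{1}\{x<cT\}+\mathbbm{1}\{x\ge cT\}$ rather than a clean Heaviside function, and tracking the front through repeated such restarts without a quantitative version of Lemma~\ref{caramelo} is problematic.

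The paper avoids the iteration entirely by exploiting stationarity, which is the actual point of the $\cite{BCDM+86}$ strategy here. One uses Proposition~\ref{kilo} to get a stationary distribution $\hat\nu^N$ for the process seen from its leftmost particle, and then Proposition~\ref{panic} gives the exact identity
\begin{align*}
w_N \;=\; \frac{\mathrm d}{\mathrm dt}\,\mathbb{E}_{\nu^N}\bigl[M^N_t\bigr]
\end{align*}
valid at \emph{every} $t>0$. Substituting Lemma~\ref{thm_finite_hydro} turns this into
\begin{align*}
w_N \;=\; \frac{N}{N-i_0}\,\mathbb{E}_{\mu^N}\!\left[\int_{-\infty}^{\infty} h_p^N\bigl(F^N(t,x)\bigr)\,\mathrm dx\right],
\end{align*}
and the monotonicity result Proposition~\ref{trikonasana} lets one replace the unknown stationary initial law $\mu^N$ by the explicit configuration $\zeta^0$ with all finite particles at the origin, giving a one-sided inequality. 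Now the whole computation takes place at a \emph{fixed finite} $t_0$: pick $t_0,R$ so that $\int_{M_{t_0}-R}^{M_{t_0}+R} h_p(U^0(t_0,x))\,\mathrm dx\ge c^*-\varepsilon$ via \eqref{coronavirus}--\eqref{kpp_property_1}, then apply the hydrodynamic limit Theorem~\ref{thm_hydro} (and the uniform convergence $h_p^N\to h_p$) to push the microscopic integral to the macroscopic one as $N\to\infty$. No iteration is needed, and the $N\to\infty$ limit never competes with a $t\to\infty$ limit. To repair your argument you would essentially be led to reconstruct this mechanism; as written, the proposal leaves a genuine gap.
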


\begin{remark}
	Recall the definition of $ h_p $ given in \eqref{defi0002}.
	It is well known that $ c^*\ge \sqrt{-2h_p'(1)} $.
	Since $-h_p'(1) = \lambda k \hat p(k-1)$, 
	where $\hat p(k-1)$ is the probability of having a particle jumping on top of the rightmost one in th $k-$tuple at a branching-selection event, this bound has a natural interpretation in terms of the parameters of the model.
	Observe that it is not sharp in many cases,
	for example when $\hat p(k-1)=0$. But it is good enough in several situations as we will see.
\end{remark}

To prove Proposition \ref{uttanasana} we follow a strategy recently used in \cite{GJM}.
Let $  \hat Z^{N}=\{\hat Z^{N}_t:t\ge 0 \} $ be the process $ Z^{N} $ as seen from its leftmost particle,
\begin{align*}
\hat Z^{N}_t(i)
=Z^{N}_t[i+1]
-Z^{N}_t[1], \quad i\in
\{1,\ldots,N-i_0-1 \}, t\ge 0.
\end{align*}
We will make use of its stationary distribution.
\begin{proposition}\label{kilo}
	The process $ \hat Z^{N} $ has a unique stationary distribution $ \hat\nu^{N} $.
\end{proposition}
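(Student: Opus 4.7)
The plan is to prove existence and uniqueness of $\hat\nu^N$ separately, following the strategy used for comparable branching--selection systems in \cite{DR,GJM,BG10}. First I would verify that $\hat Z^N$ is a Feller Markov process on the closed cone $\mathcal{C}=\{z\in\R^{N-i_0-1}:0\le z(1)\le\cdots\le z(N-i_0-1)\}$; this follows directly from the graphical construction of Section \ref{sec_scaling_limit}, since the Brownian semigroup is Feller and the rank-based jump map is continuous on configurations with distinct coordinates, which occur with probability one at any positive time.

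For existence, I would prove tightness of $\{\mathcal L(\hat Z^N_t)\}_{t\ge0}$ for any deterministic initial condition. The key point is that the spread $\hat Z^N_t(N-i_0-1)$ admits a Foster--Lyapunov drift condition: between Poisson marks it grows only at Brownian scale, while at each mark there is a uniformly positive probability (coming from the uniform choice of $k$-tuple and the law $p$) that a left-ranked particle is transported to a higher rank, strictly reducing the spread. Combined with the constant jump rate $\lambda N$, this yields a compact recurrence set. Krylov--Bogolyubov applied to the Feller process then produces at least one invariant probability measure $\hat\nu^N$.

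For uniqueness, I would exploit the monotonicity of Proposition \ref{thm_comparison}. Build two copies of $\hat Z^N$ on a common probability space through the graphical construction, sharing the underlying Brownian motions and the marked Poisson process; the mass-transport order is then preserved between the two copies. On any sufficiently long time window I would construct a regeneration event of uniformly positive probability in which the Brownian fluctuations remain small while a finite cascade of Poisson marks sequentially transports every left-ranked particle onto the rightmost one in both copies, driving each configuration to the singleton state $(0,\dots,0)$. From that instant on the two processes are indistinguishable. Iterating over disjoint time blocks, a Borel--Cantelli argument shows that coupling occurs almost surely in finite time, which forces any two initial conditions to produce the same limiting distribution and thus establishes uniqueness of $\hat\nu^N$.

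The main obstacle I expect is the explicit construction of the regeneration step. Under a shared marked Poisson process the label affected by a given mark can differ between the two copies since it depends on the current ranking of each configuration separately, so one must carefully track the joint evolution and design an explicit sequence of $k$-tuple choices and $p$-marks that forces simultaneous collapse. Should a direct coupling prove too intricate, a fall-back is to verify a small-set minorization condition on a compact subset of $\mathcal C$ and invoke the Meyn--Tweedie framework to deduce uniqueness from positive Harris recurrence of $\hat Z^N$.
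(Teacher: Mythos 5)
Your primary route has a genuine gap once $N-i_0\ge 3$. Each jump of the $(N,p)$-BBM teleports exactly one rank onto another, so a single Poisson mark creates at most one coincidence of positions; and between any two consecutive marks the shared Brownian increments are almost surely nonzero and immediately break every coincidence created earlier, while the inter-arrival times of the Poisson clock are almost surely positive. Hence at the time of the last mark in your cascade the configuration still has at most one pair of coinciding particles, so the event $\hat Z^N_t=(0,\dots,0)$ has probability zero for every $t>0$. There is therefore no a.s.\ finite random time at which the two coupled copies literally meet, and the Borel--Cantelli step does not go through. Meeting at a state with distinct coordinates would not help either: $\hat Z^N_t=\hat Z'^N_t$ does not propagate forward under the shared graphical construction unless the label-to-rank permutations of the two copies also agree, which is precisely why you needed the singleton state.

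Your fall-back is the correct argument and is essentially what the paper does: the paper's proof of this proposition is a one-line reduction to Harris recurrence, citing \cite[Theorem 2.3]{GJM} for the $k=2$ case and declaring the extension to general $k$ routine. Your cascade idea is not wasted, but it should be recast as a minorization step rather than an exact-coupling step: a fixed, positive-probability pattern of marks and small Brownian excursions contracts the spread of the configuration into a fixed compact neighbourhood, after which the Gaussian density of the diffusion part yields a uniform lower bound on the $t_0$-step transition kernel over a small set. Combined with a Foster--Lyapunov drift bound of the type you already sketched for existence, this gives positive Harris recurrence in the Meyn--Tweedie sense, hence existence and uniqueness of $\hat\nu^N$.
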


\begin{proof}
	The result follows by showing that the process is Harris recurrent.
	The proof is very similar to the one of \cite[Theorem 2.3]{GJM} for the special case $ k=2 $, so we omit it.
\end{proof}

Let $ \nu^{N} :=\delta_0\otimes \hat \nu^N$,
namely a particle is fixed at the origin and the remaining $ N-1 $ are sampled according to $ \hat\nu^N $,
and for $ t\ge 0 $ let
\begin{align*}
M^{N}_t:=\frac{1}{N-i_0}\sum^{N-i_0}_{i=1}Z_t^{N}(i)
\end{align*}
be the empirical mean of $ Z^{N}_t $.
The following result gives a formula for the velocity in term of the empirical mean.
The proof can be borrowed with no modification from the
one given below formula (4.9) in \cite{GJM}.
\begin{proposition}\label{panic}
	For every $ t> 0 $,
	\begin{align*}
	w_{N}= \frac{\mathrm{d}}{\mathrm{d} t}\,\BB E_{\nu^{N}}[M^{N}_t].
	\end{align*}
\end{proposition}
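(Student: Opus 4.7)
The plan is to decompose $M^N_t$ into the position of the leftmost particle plus a term that is stationary under $\nu^N$, use the Markov property to derive additivity of $t \mapsto \mathbb{E}_{\nu^N}[Z^N_t[1]]$, and finally identify the linear rate with $w_N$ by invoking Proposition \ref{caramba}.

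More concretely, first I would write
\begin{align*}
M^N_t \;=\; Z^N_t[1] \;+\; \frac{1}{N-i_0}\sum_{i=1}^{N-i_0-1}\hat Z^N_t(i),
\end{align*}
so that, since $\hat Z^N_t \sim \hat\nu^N$ for every $t\ge 0$ when $Z^N_0\sim \nu^N=\delta_0\otimes\hat\nu^N$ by stationarity of the process as seen from the leftmost particle (Proposition \ref{kilo}), the second summand has expectation independent of $t$. Consequently $\frac{d}{dt}\mathbb{E}_{\nu^N}[M^N_t]=\frac{d}{dt}\mathbb{E}_{\nu^N}[Z^N_t[1]]$, and the question reduces to analysing the single quantity $\varphi(t):=\mathbb{E}_{\nu^N}[Z^N_t[1]]$.

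Next I would show that $\varphi$ is additive, i.e. $\varphi(t+s)=\varphi(t)+\varphi(s)$. The key ingredients are (a) the Markov property of $Z^N$ and (b) the translation invariance of the dynamics: since the Brownian increments and the jump rules depend only on relative positions and ranks, the law of $Z^N_{t+s}-Z^N_s[1]\mathbf 1$ conditional on $Z^N_s$ equals the law of $Z^N_t$ started from $(0,\hat Z^N_s)$. Taking the leftmost coordinate of both sides and averaging over $\hat Z^N_s\sim\hat\nu^N$ (which is preserved by stationarity), one obtains
\begin{align*}
\mathbb{E}_{\nu^N}\bigl[Z^N_{t+s}[1]-Z^N_s[1]\bigr] \;=\; \mathbb{E}_{\nu^N}\bigl[Z^N_t[1]\bigr],
\end{align*}
which is the desired additivity. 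Combined with the monotonicity/measurability of $\varphi$ inherited from the construction, Cauchy's functional equation yields $\varphi(t)=ct$ for some constant $c\in\mathbb R$, so that $\frac{d}{dt}\mathbb{E}_{\nu^N}[M^N_t]=c$ for every $t>0$.

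Finally, to identify $c=w_N$, I would invoke Proposition \ref{caramba}: starting from any (in particular from $\nu^N$) the ratio $t^{-1}Z^N_t[1]$ converges to $w_N$ a.s.\ and in $L^1$, so $c=\lim_{t\to\infty}\varphi(t)/t=w_N$. The main subtlety, and presumably the reason the authors refer to \cite{GJM}, is the careful justification of the translation-invariant Markov identity (step two) together with the $L^1$-uniform integrability needed to pass from the a.s.\ velocity $w_N$ to the derivative of the mean; both can be settled by the graphical construction of Section \ref{sec_scaling_limit} and the mass-transport domination of Proposition \ref{thm_comparison}, which show that $Z^N_t[1]$ is dominated by a BBM maximum, giving enough integrability.
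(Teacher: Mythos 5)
Your argument is correct and follows the route the authors have in mind (the paper delegates the proof to the one below formula (4.9) in \cite{GJM}, which uses exactly these ingredients). Decomposing $M^N_t$ into $Z^N_t[1]$ plus a $\hat\nu^N$-stationary functional, invoking the Markov property together with translation invariance of the dynamics to deduce that $t\mapsto\mathbb E_{\nu^N}[Z^N_t[1]]$ is additive (hence linear, being measurable), and then identifying the slope through the $L^1$ convergence of Proposition \ref{caramba}, is precisely the intended chain of reasoning. One small repair to your closing remark: the mass-transport dominance by a BBM controls the \emph{upper} tail of $Z^N_t[1]$ (it bounds the maximum), but for the lower tail you should note instead that jumps only move particles to the right, so $Z^N_t[1]$ is bounded below by $\min_i\bigl(Z^N_0(i)+B_t(i)\bigr)$, the minimum of the driving Brownian motions started from the initial positions; together these give the $L^1$ control (uniform on compacts) needed both to make $\varphi(t):=\mathbb E_{\nu^N}[Z^N_t[1]]$ finite and to apply the conditional-expectation step in the additivity argument.
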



Since for every particle configuration $\zeta\in\BB R^{N-i_0}$ we have
\begin{align}\nn
\frac{1}{N-i_0}\sum^{N-i_0}_{i=1}\zeta(i)
=
\int_{0}^\infty 1- F_\zeta(x)\mathrm{d} x-\int_{-\infty}^0 F_\zeta(x)\mathrm{d} x,
\end{align}
if we call $ G^{N} $ the distribution function of the empirical law of $ Z^{N} $,
we have
\begin{align}\nn
\BB E_{\nu^{N}}[M^{N}_t]
=
\int_{0}^\infty \BB E_{\nu^{N}}[ 1- G^{N}(t,x)]\mathrm{d} x-\int_{-\infty}^0 \BB E_{\nu^{N}}[G^{N}(t,x)]\mathrm{d} x.
\end{align}
We can take derivatives with respect to $ t $ to get
\begin{align*}
\dfrac{\mathrm{d}}{\mathrm{d} t}\BB E_{\nu^N}[M^{N}_t]
=-\int_{-\infty}^{\infty} \partial_t\BB E_{\nu^N}[G^{N}(t,x)]\mathrm{d} x.
\end{align*}
Using that
\begin{align*}
G^{N}(t,x)
=
\frac{N}{N-i_0}F^N(t,x)-\frac{i_0}{N-i_0},
\end{align*}
where $ F^N $ is the distribution function associated to the empirical law of $ Y^N $,
we have proven that
\begin{align}\label{jazmin}
w_N
=-\frac{N}{N-i_0}\int_{-\infty}^\infty \partial_t \BB E_{\mu^N}[F^N(t,x)] \mathrm{d} x
\end{align}
for every $ t>0 $,
where $ \mu^N $ is the distribution in $ [-\infty,\infty)^N $ obtained by fixing the first $ i_0 $ labels at $ -\infty $
and drawing the remaining $ N-i_0 $ ones with $ \nu^N $.

Before continuing, we need the following monotonicity result. The proof is almost a mimic of the one in Lemma 4.1 in  \cite{GJM} with the only exception that at jump times we need to couple $ k $-uples instead of pairs. The extension is straightforward.


\begin{proposition}[{\cite[Lemma 4.1]{GJM}}]\label{trikonasana}
	If $ Y $ and $ Y' $ are $ (N,p) $-BBMs satisfying
	\begin{align*}
	Y_0[i+1]-Y_0[i]
	\le_{st}
	Y_0'[i+1]- Y_0'[i]
	\end{align*}
	for every $ i\in\{1,\ldots,N-1\} $,
	then
	\begin{align*}
	Y_t[i+1]-Y_t[i]
	\le_{st}
	Y_0'[i+1]-Y_0'[i]
	\end{align*}
	for every $ i\in\{1,\ldots,N-1\}$ and every $t>0 $.
\end{proposition}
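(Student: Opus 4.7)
The plan is to exhibit a single probability space carrying both processes, on which the pathwise gap inequalities $Y_t[m+1]-Y_t[m]\le Y'_t[m+1]-Y'_t[m]$ are preserved for every $m$ and every $t\ge 0$ whenever they hold at time $0$. By Strassen's theorem, the one-dimensional hypothesis $Y_0[i+1]-Y_0[i]\le_{st} Y'_0[i+1]-Y'_0[i]$, applied for each $i$ separately, allows me to start from an initial coupling in which the single $i$-th gap inequality holds almost surely; the dynamical coupling described below will preserve each individual gap inequality independently of the others, so applying the argument coordinatewise in $i$ will produce the asserted pairwise stochastic dominance of gaps at every later time.

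For the dynamical coupling I would use the graphical construction of Section \ref{sec_scaling_limit} and share between the two copies the Poisson marks, namely the times $T^N_n$, the uniformly chosen $k$-tuples $S^N_n$ and the selection labels $R^N_n\sim p$. For the Brownian component I use the rank-synchronous coupling, in which on each inter-jump interval the $m$-th order statistic of each of $Y$ and $Y'$ is driven by a common Brownian motion $\widetilde W_m$, the joint order-statistics trajectory being realized through the multidimensional Skorokhod reflection map for non-crossing configurations. Under this coupling, each gap $G_m^Y(t):=Y_t[m+1]-Y_t[m]$ and its analogue $G_m^{Y'}(t)$ satisfies a reflected semimartingale equation driven by $\widetilde W_{m+1}-\widetilde W_m$, and the standard monotonicity of the one-dimensional Skorokhod map preserves the pathwise inequality $G_m^Y\le G_m^{Y'}$ on the interval whenever it holds at its left endpoint.

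At a jump time $T^N_n$ the same quantile pair $(\ell_a,\ell_b)=S^N_n(R^N_n)$ with $\ell_a<\ell_b$ is applied to both processes via the operator $\Gamma_{\ell_a,\ell_b}$. A direct computation shows that the induced update of the gap vector is:
\begin{align*}
G^{\mathrm{new}}_m &= G_m \quad\text{for } 1\le m\le\ell_a-2\text{ or }\ell_b\le m\le N-1,\\
G^{\mathrm{new}}_{\ell_a-1} &= G_{\ell_a-1}+G_{\ell_a},\\
G^{\mathrm{new}}_m &= G_{m+1} \quad\text{for }\ell_a\le m\le\ell_b-2,\\
G^{\mathrm{new}}_{\ell_b-1} &= 0.
\end{align*}
Each line is componentwise monotone nondecreasing in the input gaps, so the pairwise ordering $G_m^Y\le G_m^{Y'}$ survives each jump. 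Since on any compact time interval there are only finitely many jumps a.s., combining the inter-jump preservation with this jump-level monotonicity and iterating yields the desired pathwise conclusion, and hence the statement after taking marginals.

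The step I expect to be the main obstacle is the between-jump analysis: one must justify rigorously that the ordered positions of $N$ coupled Brownian motions in the two processes can be represented through a common Skorokhod reflection in such a way that two ordered initial configurations yield ordered gap trajectories. This is precisely the content of the reflection argument carried out in \cite[Lemma 4.1]{GJM} for the case $k=2$; since the Brownian dynamics between jumps does not depend on $k$, that argument transfers verbatim, and the only new ingredient needed in the $k\ge 2$ setting is the componentwise monotonicity of $\Gamma_{\ell_a,\ell_b}$ at the level of gaps displayed above.
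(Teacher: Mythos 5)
Your jump-time gap update is computed correctly, and identifying the componentwise monotonicity of that map as the genuinely new ingredient relative to the $k=2$ case of \cite{GJM} is the right observation; the paper itself gives only a one-line remark to this effect and defers the Brownian part entirely to \cite{GJM}, so your proposal is a reasonable and more explicit reconstruction of the intended argument.

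However, the opening paragraph of your plan contains a claim that is contradicted by your own formula, and this is a real gap. You assert that the dynamical coupling ``will preserve each individual gap inequality independently of the others,'' so that a coordinate-by-coordinate application of Strassen's theorem suffices. But look at your jump update: $G^{\mathrm{new}}_{\ell_a-1}=G_{\ell_a-1}+G_{\ell_a}$ depends on two input gaps, and $G^{\mathrm{new}}_m=G_{m+1}$ for $\ell_a\le m\le\ell_b-2$ imports a different index. So to carry the $i$-th gap inequality across a single jump you already need the $(i+1)$-th inequality to hold \emph{in the same coupling}, and iterating over jumps you need the whole vector $(G^Y_m)_m$ to be dominated by $(G^{Y'}_m)_m$ componentwise, simultaneously. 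Separate Strassen couplings for each $i$ do not produce a single coupling with this joint property, and marginal stochastic dominance of the spacings does not in general imply the existence of such a joint coupling (an easy anti-correlation example shows this). The same issue is latent in your between-jump discussion: the spacing vector of $N$ ordered Brownian particles is a multidimensional reflected process in the orthant whose boundary push at $\{G_m=0\}$ also feeds into $G_{m\pm 1}$, so what one must invoke from \cite{GJM} is monotonicity of the multidimensional Skorokhod map for this particular reflection matrix, not a family of autonomous one-dimensional maps applied per coordinate.

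The correct architecture is therefore: construct a \emph{single} coupling of $(Y_0,Y'_0)$ under which the entire vector of spacings is ordered almost surely; share the Poisson marks, the $k$-tuples and the selection pairs; use the rank-synchronous Brownian coupling and the monotonicity of the multidimensional Skorokhod reflection between jumps; and use your gap-level monotonicity of $\Gamma_{\ell_a,\ell_b}$ at jumps. The initial joint coupling is automatic in the way the proposition is actually used in this paper, where one of the two initial configurations is deterministic with all finite spacings equal to zero; for arbitrary random initial data the hypothesis should be read (or strengthened) as a joint ordering of the spacing vectors rather than $N-1$ separate marginal orderings.
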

For every nonnegative function $g\colon[0,1] \to [0,\infty)$,
\begin{align}\nn
\BB E_\zeta\Big(\int_{-\infty}^{\infty}g(F^{N}(t,x))\mathrm{d} x\Big)
=\sum_{i=i_0}^{N-1}g \big(\tfrac{i}{N}\big)
\BB E_\zeta(Y^{N}_t[i+1]-Y^{N}_t[i])
\end{align}
holds for any $\zeta \in [-\infty,\infty)^N $,
so
\begin{align*}
\BB E_{\zeta'}\Big[\int_{-\infty}^{\infty}{g}(F^{N}(t,x))\mathrm{d} x\Big]
\ge
\BB E_{\zeta}\Big[\int_{-\infty}^{\infty}{g}(F^N(t,x))\mathrm{d} x\Big]
\end{align*}
if the spacings of $\zeta'$ dominate those of $\zeta $,
namely if $ \zeta'[i+1]-\zeta'[i]\ge \zeta[i+1]-\zeta[i] $ for every $ i $.
We use this fact with $g=h^N_p$ and Lemma \ref{thm_finite_hydro} to get
\begin{align*}
-\int_{-\infty}^\infty \partial_t \BB E_{\mu^N}[F^N(t,x)] \mathrm{d} x
&=
-\int \int_{-\infty}^\infty \partial_t \BB E_{\zeta}[F^N(t,x)] \mathrm{d} x \,\mu^N(\mathrm{d} \zeta)
\\ \nn
&=
\int  
\BB E_{\zeta}\Big[
\int_{-\infty}^\infty 
{h^N_p}(F^N(t,x))\Big]
\mathrm{d} x \,\mu^N(\mathrm{d} \zeta)
\\
&\ge
\BB E_{\zeta^0}\Big[
\int_{-\infty}^\infty 
{h^N_p}(F^N(t,x))
\mathrm{d} x \Big]
\end{align*}
for $ \zeta^0$  defined as
\begin{align}\nn
\zeta^0(i)
=
\begin{cases}
-\infty & \mbox{if $ i\le i_0 $}
\\
0 & \mbox{if $ i> i_0 $}.
\end{cases}
\end{align}
This inequality and \eqref{jazmin} reduces
\eqref{mora} to proving that
\begin{align}\label{modal2}
\liminf_{N\to\infty}\,\BB E_{\zeta^0}\Big[
\int_{-\infty}^\infty 
{h^N_p}(F^N(t_0,x))
\mathrm{d} x \Big]
\ge c^*
\end{align}
holds for some $ t_0>0 $. Let $ U^0 $ be the solution to the F-KPP equation \eqref{kpp_eq_1} with initial condition given by the heavyside function $ \mathbbm{1}_{[0,\infty)} $,
let $ M_t $ be the median of $ U^0(\cdot,t) $,
and let $ W_{c^*} $ be the minimal velocity wavefront.
From \eqref{coronavirus} and \eqref{kpp_property_1} 
it follows that,
given $ \varepsilon>0 $, we can fix $ t_0>0 $ and $ R>0 $ such that
\begin{align*}
\int_{M_{t_0}-R}^{M_{t_0}+R}h_p(U^0(t_0,x))\mathrm{d} x
\ge
c^* -\varepsilon.
\end{align*}
Under this choice, the expectation on the l.h.s. of \eqref{modal2} is bounded from below by
\begin{align*}
\BB E_{\zeta^0}\Big(\int_{M_{t_0}-R}^{M_{t_0}+R}
{h_p^N}(F^{N}(t,x))\mathrm{d} x\Big)
-\int_{M_{t_0}-R}^{M_{t_0}+R}h_p(U^0(t_0,x))\mathrm{d} x
+c^*-\varepsilon.
\end{align*}
The difference between the first two terms is less or equal than
\begin{align*}
\nonumber &\int_{M_{t_0}-R}^{M_{t_0}+R}
\BB E_{\zeta^0} (| {h_p^N}(F^N(t_0,x))  -h_p(F^N(t_0,x)) |)
\mathrm{d} x 
\\ 
& \quad + \int_{M_{t_0}-R}^{M_{t_0}+R}
\BB E_{\zeta^0} (| h_p(F^N(t_0,x))  -h_p(U^0(t_0,x)) |)
\mathrm{d} x.
\end{align*}
The first term vanishes due to the uniform convergence $h_p^N \to h_p$. Since $ h_p $ is Lipschitz continuous, the second term is less or equal than
\begin{align*}
2CR\,
\BB E_{\zeta^0} (\| F^N(t_0,\cdot)-U^0(t_0,\cdot) \|_\infty),
\end{align*}
for some $C>0$, which vanishes due to Theorem \ref{thm_hydro}.
This completes the proof of Proposition \ref{uttanasana}.

\section{Proof of Theorem \ref{thm_main}}

We now prove the asymptotic behavior of the velocities for arbitrary $b$ and $D$, Theorem \ref{thm_main}. The upper bound is easily obtained in terms of a (rate $ 1 $) BBM. 
If $\tilde X^N $ is a BBM and $ X^N $ is a $(b,D)$-BBM satisfying the hypotheses of Theorem \ref{thm_main} and $ X^N_0\le_{st}  \tilde X_0^N $, then $ X^N\le_{st}  \tilde X^N $. 
In fact, observe that $ \tilde X^N $ is a $ (\tilde  b,\tilde{\mathbf{D}}) $-BBM with  $  \tilde b \equiv 1$ and $  \tilde D_j= \delta_{-\infty}$ for every $ j\in\BB N $. Hence this is an immediate consequence of Proposition \ref{thm_comparison}. Since the rightmost particle of $\tilde X^N$ has velocity $\sqrt 2$, this readily implies
\begin{align*}
\limsup_{t\to\infty}\frac{1}{t}X^N_t[N_t]\le \sqrt{2} \quad \mbox{a.s.} 
\end{align*}

To prove the lower bound, we first deal with case (1), $ D( -\infty)=0 $.
Let $Y^{N} $ be an $(N,p)$-BBM with parameters $p(i,j)=\mathbbm{1} \{i=k-1,j=k\}$, $\lambda = 1/k$, and initial distribution $ Y^{N}_0 {=} X_{0}^{N} $.
Proposition \ref{bailey} implies that for $ k $ large enough $Y^N \le_{st} X^N$, so
\begin{align*}
v_N=\lim_{t\to\infty}\frac{1}{t} X^N_t[N]
\ge
\lim_{t\to\infty} \frac{1}{t}  Y_{t}^{N}[N] = 	w_N
\quad
\mbox{a.s.},
\end{align*}
the existence of the first limit following as in the proof of Proposition \ref{caramba}.
Observe that in this case we have $h_p'(v)\ge h_p'(1) = -1$ for every $ v\in (0,1) $, so the minimal velocity of the F-KPP equation with source $h_p$ is $ c^*=\sqrt{2} $ (see Section \ref{appendix1}). 
It only remains to let $N\to \infty $ and use Proposition \ref{uttanasana} to get the result.

For case (2), $ D( -\infty)>0 $, for any $\hat N \in \N$, we define the stopping time $ \tau $ by 
\begin{align*}
\tau=\inf\{t\ge 0: N_t= \hat  N \},
\end{align*}
which is finite almost surely. Let $ \hat X^{\hat N} $ be a $ (b,D) $-BBM with random initial distribution $ \hat X_0^{\hat N}\stackrel{d}{=}X_{\tau}^{ N} $. The strong Markov property guarantees
\begin{align}\label{thomson}
\liminf_{t\to\infty}\frac{1}{t} X^N_t[N_t]
=
\liminf_{t\to\infty} \frac{1}{t} \hat X_{t}^{\hat N}[\hat N_t]
\quad
\mbox{a.s.}
\end{align}
Let $Y^{\hat N} $ be an $(\hat N,p)$-BBM as before (but with $\hat N$ particles intead of $N$), and initial distribution $ Y^{\hat N}_0 \stackrel{d}{=}  \hat X_{0}^{\hat N} $.
The right-hand side of \eqref{thomson} is bounded from below by
\begin{align}\nn
\lim_{t\to\infty}\frac{1}{t}Y_t^{\hat N}[\hat N]=w_{\hat N}.
\end{align}
Since $ \hat N $ is arbitrary, Proposition \ref{uttanasana} gives the desired bound. 

\appendix

\section{F-KPP equation}\label{appendix1}

The results presented here are standard in the theory of non-linear parabolic equations; see  for example  \cite{Bra83,VVV00,GK04}.

\begin{definition}	
	Let $ t>0 $, and let $ V_0,h:\BB R\to\BB R $ and $ g:(0,t]\times\BB R \to\BB R$ be arbitrary functions.
	A (classical) solution to the differential equation
	\begin{align}\label{matambre}
	&\partial_t V=\tfrac{1}{2}\partial_{xx}V-h(V)+g 
	\\[5pt]\label{pizza}
	&V(0,\cdot)=V_0
	\end{align}
	in the time interval $ [0,t] $ is a function $ V:[0,t]\times \BB R\to\BB R $
	that satisfies the following conditions:
	\begin{enumerate}
		\item  $ V|_{\{0\}\times\BB R}=V_0 $;
		\item\label{perico} $ V|_{(0,t]\times\BB R }\in C^{1,2}((0,t]\times \BB R) $ and 
		\eqref{matambre} is satisfied for every $(s,x)\in (0,t]\times \BB R $;
		\item $ \lim_{s\downarrow 0}V(s,x)=V_0(x) $ for every $ x\in\BB R $ continuity point of $ V_0 $.
	\end{enumerate}
\end{definition}

In the previous definition,
condition
$ V|_{(0,t]\times\BB R }\in C^{1,2}((0,t]\times \BB R) $
means that there exist an open set $ A\subset\BB R^2 $ containing $ (0,t]\times\BB R $
and an extension $ \bar V\in C^{1,2}(A)  $
of $ V|_{(0,t]\times\BB R } $.

\begin{theorem}\label{thm_termotanque}
	Let $ t>0 $. Assume $ h\colon[0,1] \to \R$ is continuous with $h(0)=h(1)=0$. If $ V_0 $ is a distribution function
	and $ g:(0,t]\times \BB R\to\BB R $ is continuous and bounded, the differential equation (\ref{matambre},\ref{pizza}) has 	a unique bounded solution $ V $ in the time interval $ [0,t] $.
\end{theorem}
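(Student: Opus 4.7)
The plan is to reformulate (\ref{matambre},\ref{pizza}) in mild form via Duhamel's principle and then apply a fixed-point argument, upgrading the resulting mild solution to a classical one by parabolic smoothing. Let $P_s$ denote convolution with the Gaussian kernel $\Phi$ from \eqref{cuadrito}. A bounded function $V$ satisfies (\ref{matambre},\ref{pizza}) on $[0,t]$ in the classical sense if and only if it satisfies the integral equation
\begin{equation*}
V(s,x) = (P_s V_0)(x) - \int_0^s \bigl(P_{s-r}\bigl[h(V(r,\cdot))-g(r,\cdot)\bigr]\bigr)(x)\,\mathrm{d}r
\end{equation*}
for $(s,x)\in(0,t]\times\R$, together with the pointwise recovery of $V_0$ at its continuity points. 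The forward direction is Duhamel; the converse follows from the standard $C^{1,2}$ smoothing of $P_{s-r}$ away from $r=s$ and from $\lim_{s\downarrow 0}(P_sV_0)(x)=V_0(x)$ at continuity points of the bounded function $V_0$.

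First, I would extend $h$ to a bounded continuous function $\tilde h\colon\R\to\R$ (for instance setting $\tilde h\equiv 0$ outside $[0,1]$, which is consistent thanks to $h(0)=h(1)=0$). Existence of a bounded mild solution then follows from a fixed-point argument on the Banach space $\mathcal{X}_T=C_b((0,T]\times\R)$ with the sup norm, for $T\in(0,t]$ chosen small. Because $\tilde h$ is merely continuous, I would first approximate it uniformly on compacts by Lipschitz functions $\tilde h_n$, apply Banach's contraction mapping to the operator
\begin{equation*}
T_n(V)(s,x)=(P_sV_0)(x)-\int_0^s\bigl(P_{s-r}[\tilde h_n(V(r,\cdot))-g(r,\cdot)]\bigr)(x)\,\mathrm{d}r
\end{equation*}
on a ball of $\mathcal{X}_T$ stable under $T_n$, and iterate in time to cover $[0,t]$, obtaining a uniformly bounded family $V_n$. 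The smoothing of $P_{s}$ provides uniform spatial and temporal equicontinuity on every strip $[\varepsilon,t]\times\R$, so by Arzelà–Ascoli and a diagonal extraction $V_n\to V$ locally uniformly on $(0,t]\times\R$; passing to the limit in the integral equation and using $\|\tilde h_n-\tilde h\|_\infty\to 0$ on the range of the $V_n$ yields a bounded mild solution $V$.

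For uniqueness, let $V_1,V_2$ be two bounded mild solutions; their difference $W=V_1-V_2$ satisfies the linear integral equation
\begin{equation*}
W(s,x)=-\int_0^s\bigl(P_{s-r}[a(r,\cdot)W(r,\cdot)]\bigr)(x)\,\mathrm{d}r,
\end{equation*}
where $a(r,y)=\bigl(h(V_1(r,y))-h(V_2(r,y))\bigr)/W(r,y)$ on $\{W\neq 0\}$ and $a=0$ elsewhere. On the applications in this paper $h$ is a polynomial (the Bernstein combination $h_p$), so $a$ is bounded and Gronwall on $s\mapsto\|W(s,\cdot)\|_\infty$ gives $W\equiv 0$. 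For merely continuous $h$, one first argues uniqueness on an interval $[0,T_\ast]$ on which the $L^\infty$ norms of $V_1,V_2$ are close enough that uniform continuity of $h$ yields a Gronwall-type estimate, then iterates.

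The main obstacle is the tension between the low regularity of $V_0$ (only a distribution function, generally discontinuous) and the only continuous nonlinearity $h$: one cannot simply invoke a contraction argument in $C_b$ because $h$ is not Lipschitz, and one cannot work in a space that demands continuity of $V_0$ either. Both difficulties are resolved by leaning on the regularization of the heat semigroup $P_s$ for $s>0$, combined with a Lipschitz approximation of $h$ for existence and a Gronwall/comparison argument for uniqueness; verifying that the limit $V$ does attain $V_0$ at continuity points and is genuinely $C^{1,2}$ on $(0,t]\times\R$ is then a routine application of parabolic regularity to the inhomogeneous heat equation with bounded right-hand side $-h(V)+g$.
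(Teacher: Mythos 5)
The paper does not actually prove Theorem~\ref{thm_termotanque}: it records the statement as standard and points to the references \cite{Bra83,VVV00,GK04}. Your Duhamel/fixed-point/compactness scheme is therefore an independent attempt rather than a parallel to the paper's argument. For existence your outline is reasonable: extend $h$ to a bounded continuous $\tilde h$ on $\R$, approximate by Lipschitz $\tilde h_n$, run the contraction with the a priori bound $\|V_n(s,\cdot)\|_\infty\le\|V_0\|_\infty+s\left(\|\tilde h_n\|_\infty+\|g\|_\infty\right)$ so that the local-in-time solutions stitch over $[0,t]$, and extract a limit via interior parabolic smoothing.

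The genuine gap is in the uniqueness step for merely continuous $h$. You correctly note that for $h$ Lipschitz (in particular for the polynomial $h_p$ that is actually used throughout the paper) the coefficient $a$ is bounded and Gronwall closes the argument. But the subsequent claim that ``uniform continuity of $h$ yields a Gronwall-type estimate'' is not correct in general. Uniform continuity only gives $|h(u)-h(v)|\le\omega(|u-v|)$ for a modulus $\omega$, and the resulting inequality $\phi(s)\le\int_0^s\omega(\phi(r))\,\mathrm{d}r$ with $\phi(0)=0$ forces $\phi\equiv 0$ only under an Osgood condition $\int_{0^+}\mathrm{d}r/\omega(r)=+\infty$. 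For a generic continuous $h$ this fails, exactly as for the ODE $\dot u=\sqrt u$, $u(0)=0$; and the diffusion term does not rescue uniqueness, since one may construct continuous $h$ with $h(0)=h(1)=0$ and a non-Osgood modulus at an interior value that produces Peano-type non-uniqueness already for $x$-independent profiles. So the theorem as stated for arbitrary continuous $h$ requires a Lipschitz or Osgood hypothesis to support uniqueness, and your argument does not supply one. This gap does not propagate into the paper, since the theorem is only invoked for $h=h_p$ which is a polynomial. A secondary and smaller issue: the closing claim that $C^{1,2}$ regularity follows ``by routine parabolic regularity with bounded right-hand side'' is also too quick. Schauder theory needs a H\"older-continuous right-hand side, so a bootstrap is required (bounded source gives H\"older $V$, then H\"older $V$ and H\"older $h$ give H\"older $h(V)$), which again is unavailable when $h$ or $g$ is merely continuous.
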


In the case $ h(v)=\lambda Nv $, the solution given in Theorem \ref{thm_termotanque} is characterized by the integral representation
\begin{align}\nonumber
V(s,x)=
\int_{-\infty}^{\infty} H(s,x-y)V_0(y)\mathrm{d} y
+\int_0^s \int_{-\infty}^{\infty} H(s-s',x-y)g(s',y)\mathrm{d} y\mathrm{d} s',
\end{align}
being $ H $ the Green kernel associated to operator $ \partial_t-\tfrac{1}{2}\partial_{xx}+\lambda N $, that is 
\begin{align}\nonumber
H(s,x)= \Phi(s,x)\, e^{-\lambda Ns}.
\end{align}
The function $ \Phi $ is defined in \eqref{cuadrito}. Next we state a result that controls the stability of the solution under perturbations of the initial condition and the function $ g $.

\begin{theorem}\label{thm_quinoto}
	Let $ t>0 $ and assume $ h\colon[0,1] \to \R$ is continuous with $h(0)=h(1)=0$. For every $ M>0 $ there exists a constant $ C=C(t,M)>0 $ 
	such that
	\begin{align}\nonumber
	\| V(t)-\tilde V(t) \|_\infty\le C\big(  \| V_0-\tilde{V}_0 \|_\infty
	+\| g-\tilde g \|_\infty\big)
	\end{align}
	for every $ V_0,\tilde{V_0} $ distribution functions and every $ g ,\tilde g\in C((0,t]\times \BB R)$ such that $ 0\le g,\tilde g\le M $.
	Here $ V  $ [resp. $ \tilde V $] is the unique bounded solution to equation (\ref{matambre}-\ref{pizza}) in the time interval $ [0,t] $ associated to  $ V_0$ and $g $ (resp. $ \tilde{V}_0 $ and $ \tilde g $).
\end{theorem}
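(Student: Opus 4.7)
The strategy is to convert the PDE stability question into an $L^\infty$ integral inequality via the mild (Duhamel) representation, and then close it with Gronwall's lemma. Each bounded classical solution of \eqref{matambre}-\eqref{pizza} satisfies, by treating $-h(V)+g$ as a known forcing and using the plain heat kernel $\Phi$ from \eqref{cuadrito},
\[
V(s,\cdot) \;=\; \Phi_s * V_0 \;+\; \int_0^s \Phi_{s-r} * \bigl(g(r,\cdot)-h(V(r,\cdot))\bigr)\,dr,
\]
and the same holds with tildes. Since $\Phi_s$ is a probability density, convolution with it is a contraction in $L^\infty(\R)$, so after subtraction only $\|V-\tilde V\|_\infty$ will show up on the right-hand side, multiplied by the Lipschitz constant of $h$ on the relevant range.

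First I would establish an a priori $L^\infty$ bound on $V$ and $\tilde V$ depending only on $t$ and $M$. Since $V_0$ is a distribution function we have $\|V_0\|_\infty\le 1$; combined with $0\le g\le M$ and the fact that $h$ is continuous on $[0,1]$ (and hence bounded on any compact extension to $\R$), iterating the Duhamel formula over short time intervals yields some $R=R(t,M)>0$ such that $\|V(s,\cdot)\|_\infty, \|\tilde V(s,\cdot)\|_\infty\le R$ for every $s\in[0,t]$. In the cases relevant to the paper, where $h\ge 0$ on $[0,1]$, the maximum principle gives the cleaner bound $0\le V(s,\cdot)\le 1+tM$. On this compact range, $h$ admits a Lipschitz constant $L=L(t,M)$; in the applications of interest $h=h_p$ or $h=h_p^N$ is in fact a polynomial, so this is automatic.

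Next I would subtract the two mild formulas. Writing $W=V-\tilde V$,
\[
W(s,\cdot) \;=\; \Phi_s*(V_0-\tilde V_0) \;+\; \int_0^s \Phi_{s-r}*\bigl[(g-\tilde g)(r,\cdot)-(h(V)-h(\tilde V))(r,\cdot)\bigr]\,dr.
\]
Taking $L^\infty$ norms, using the contraction property of $\Phi_s$, and the Lipschitz bound $|h(V)-h(\tilde V)|\le L|W|$ on the bounded range,
\[
\|W(s,\cdot)\|_\infty \;\le\; \|V_0-\tilde V_0\|_\infty \;+\; t\,\|g-\tilde g\|_\infty \;+\; L\int_0^s \|W(r,\cdot)\|_\infty\,dr.
\]
Gronwall's inequality then yields $\|W(t,\cdot)\|_\infty \le \bigl(\|V_0-\tilde V_0\|_\infty+t\,\|g-\tilde g\|_\infty\bigr)e^{Lt}$, which is the desired estimate with $C=\max(1,t)\,e^{Lt}$.

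The main technical obstacle is the passage from the pointwise continuity of $h$ (the only standing hypothesis) to the Lipschitz continuity on a bounded range that Gronwall really needs. As noted above this is harmless in every application in this paper because the relevant $h$ is a polynomial, but in general one either upgrades the hypothesis to locally Lipschitz or bypasses Gronwall by writing $h(V)-h(\tilde V)=c(s,x)\,W$ with $|c|\le L$ on the compact range of $V,\tilde V$. Then $W$ solves a linear parabolic equation with bounded zero-order coefficient $c$ and source $g-\tilde g$, and the same $L^\infty$ bound follows from a standard Feynman--Kac representation or maximum-principle argument, avoiding any explicit appeal to the Lipschitz constant of $h$ beyond the range where the solutions live.
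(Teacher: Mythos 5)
The paper does not prove this theorem: Appendix A opens with the remark that the statements there are ``standard in the theory of non-linear parabolic equations'' and refers to the cited references rather than giving a proof. So your Duhamel-plus-Gronwall argument is not an alternative to the paper's proof; it is simply a reconstruction of the standard argument, and it is the right one.

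Your proof is correct provided $h$ is locally Lipschitz, and you are right to flag that continuity alone does not suffice: the displayed estimate is generally false for merely continuous $h$ (already at the level of the spatially constant ODE $\dot v=-h(v)$, where non-Lipschitz $h$ produces non-unique or unstable flows). This is an imprecision in the theorem as stated, not a flaw in your argument; in every application in the paper $h$ is a polynomial ($h_p$ or $h_p^N$), so nothing is lost. One point worth sharpening, though: your proposed ``bypass'' of Gronwall by writing $h(V)-h(\tilde V)=c(s,x)\,W$ with $|c|\le L$ and invoking Feynman--Kac or the maximum principle is not actually a bypass. The boundedness of $c$ on the range of $(V,\tilde V)$ \emph{is} the local Lipschitz estimate, restated; the two options you list are one and the same.

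A second imprecision, which you touch but do not make explicit: $h$ is declared only on $[0,1]$, yet with $g\ge 0$ the solution $V$ can leave $[0,1]$, so $h(V)$ is not a priori defined. Both the well-posedness in Theorem~\ref{thm_termotanque} and your a priori bound $\|V(s,\cdot)\|_\infty\le 1+tM$ tacitly require an extension of $h$ to (at least) $[0,1+tM]$ — by zero outside $[0,1]$, say, which preserves continuity and, under the upgraded hypothesis, Lipschitz continuity. Once such an extension is fixed, the a priori bound, the constant $L=L(t,M)$ on $[0,1+tM]$, and the Gronwall step all close cleanly, giving $C=\max(1,t)\,e^{Lt}$ as you state. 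Again this is a defect of the statement rather than of your proof, and it is harmless in the paper's applications, where $h$ is a globally defined polynomial and $g$ is a vanishing error term.
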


Suppose during the rest of the section that $ g\equiv 0 $, 
and that $ h\in C^1([0,1]) $ satisfies $ h(u)>0 $ for every $ u\in (0,1) $.
A traveling wave with speed $  c \in\BB R $ is a solution to equation \eqref{matambre} of the form $ U(t,x)=W_ c (x- c  t) $ with $ W_ c  \in C^2(\BB R)$ non-decreasing and satisfying 
$ W_ c (-\infty)=0 $, $W_ c (\infty)=1 $.
The function $ W_ c  $ is called a wavefront  and is characterized by satisfying the ODE
\begin{align*}
\frac{1}{2}W_ c ''+ c  W_ c '-h(W_ c )=0.
\end{align*}
The following facts are well known:
\begin{enumerate}
	\item There exists a minimal speed $c^*>0$. More precisely, for each $  c \ge  c ^* $ there is a (unique) wavefront $ W_ c  $ with speed $ c  $, and there are no wavefronts for $ c<c^* $.
	\item For each $  c \ge c ^* $ we have,
	\begin{align}\label{coronavirus}
	c =\int_{-\infty}^\infty h(W_ c )\mathrm{d} x .
	\end{align}
	\item $  c ^*\ge \sqrt{-2h'(1)} $ and identity holds if $  h'(1)<0 $ and $ h'(u)\ge h'(1) $ for every $ u\in [0,1] $.
	\item  If $ U^0 $ is the unique bounded solution to equation \eqref{matambre} with initial value the heavyside function $ \mathbbm{1}_{[0,\infty) }$,
	and $ M_t $ is the median of $ U^0(t,\cdot) $,
	then
	\begin{align}\label{kpp_property_1}
	\lim_{t\to\infty}\| U^0(t,\cdot +M_t) -W_{ c ^*} \|_\infty =0.
	\end{align}
\end{enumerate}

\section{Propagation of chaos in the \texorpdfstring{$ (N,p) $}{TEXT}-BBM}\label{proof_correlations}

We prove here Lemma \ref{caramelo}. The key tool is the construction and control of the clans of ancestors. With that in mind, we introduce an alternative graphical construction of the $ (N,p) $-BBM.
We first describe it in words. Every index $ i\in\{ 1,\ldots,N \} $ rings at rate $ \lambda k $. When it rings, a $ (k-1) $-tuple of indices $ \{i_1,\ldots,i_{k-1}\}\subset \{ 1,\ldots,N \}\setminus\{i\} $ and a pair  $ (a,b)\in \{ (a',b'):1\le a'<b'\le k \} $ are chosen,
the first one uniformly at random and the second one according to $p$.
Let $j_1<\ldots < j_k$ be the ordered $k$-tuple $\{i_1,\ldots,i_{k-1},i\}$.
If $ Y^N[j_a]=Y^N(i) $, the operation $ \Gamma_{ j_a,j_{b}} $ (defined just before Section \ref{sctn_comparison}) is applied; otherwise, nothing happens.
Between time marks, the particles diffuse as independent Brownian Motions. 

To convince the reader that the two given constructions of the $ (N,p) $-BBM coincide, we compute the rate at which a particle with quantile $ i $ performs a jump in both cases.
In the one given just before Section \ref{sctn_comparison},
a $ k $-tuple is chosen at rate $ \lambda N $, the quantile $ i $ is at position $ u $ in this $ k $-tuple with probability
$\binom{i-1}{u-1}\binom{N-i}{k-u}\binom{N}{k}^{-1}$,
and it performs a jump with probability $ \sum_{v=u+1}^kp(u,v) $,
so the desired rate in this case is
\begin{align}\label{lllooo1}
\lambda N\sum_{u=1}^{k-1}\frac{\binom{i-1}{u-1}\binom{N-i}{k-u}}{\binom{N}{k}}
\sum_{v=u+1}^kp(u,v).
\end{align}
In the construction given in the previous paragraph, quantile $ i $ rings at rate $ \lambda k $,
it is located at position $ u $ in the $ k $-tuple $ \{j_1,\ldots,j_k\} $ with probability $\binom{i-1}{u-1}\binom{N-i}{k-u}\binom{N-1}{k-1}^{-1}$,
and performs a jump with probability $ \sum_{v=u+1}^kp(u,v) $,
hence 
\begin{align}\label{lllooo2}
\lambda k\sum_{u=1}^{k-1}\frac{\binom{i-1}{u-1}\binom{N-i}{k-u}}{\binom{N-1}{k-1}}
\sum_{v=u+1}^kp(u,v)
\end{align}
is the corresponding rate here. Expressions \eqref{lllooo1} and \eqref{lllooo2} coincide.

An important observation about this new approach, that will be used later, is that a necessary condition for the $ i $-th particle to jump is that the $ i $-th Poissonian clock has rang.

The process is then obtained, as in the first construction, as a deterministic function of an $N$-dimensional Brownian Motion 
$B=(B(1),\ldots,B(N))=\{(B_t(1),\ldots,B_t(N)):t\ge 0\} $ 
and a marked Poisson Process $C= \cup_{i\in\{ 1,\ldots,N \}} C^i $. Here $ C^i= \{ (T_m^i,S_m^i,(a^i_m,b^i_m),i):m\in\BB N \} $. For every $i$,  $ \{ T_m^i:m\in\BB N \} $ is a Poisson Process in $ [0,\infty) $ with intensity $ \lambda k $; for every  $ m $, $ S_m^i \subset \{ 1,\ldots,N \}\setminus\{i\}$  is a $ (k-1) $-tuple uniformly chosen at random and $ (a^i_m,b^i_m) $ a random pair with distribution $ p $.
The possible-jump events occur at the superposition of the Poissonian times $ \{ T_u:u\in\BB N \} =\cup_{i\in\{ 1,\ldots,N \}}\{ T_m^i:m\in\BB N \}$. We use possible-jump because it could happen that no jump is performed at these times.
Between these times, the increments of the particle  labeled $ i $ are governed by $ B(i) $.
At time  $ T_m^i $,
$ S_m^i $ and $ (a_m^i,b_m^i) $ play the roles of the tuple $ \{i_1,\ldots,i_{k-1}\} $ and the pair  $ (a,b) $ introduced in the first paragraph of this section respectively.

For each index  $ j\in\{ 1,\ldots,N \} $ we construct, as a deterministic function of $ C $, an auxiliary process $ \{ \varphi_t(j):t\ge 0 \} $ that we call the \textit{forward} clan of ancestors. This process is Markovian 
and its state space is the family of subsets of $ \{ 1,\ldots,N \} $.
For $ s\in [0,T_1) $, define $ \varphi_s(j)=\{j\} $. Suppose we have defined $ \varphi_s(j) $ for $ s\in [0,T_u) $,
and let $ T_u=T_m^i $.
If $ i\notin \varphi_{T_u-}(j) $, do nothing: $ \varphi_s(j)=\varphi_{T_u-}(j) $ for every $ s\in [T_u,T_{u+1}) $.
If instead $ i\in \varphi_{T_u-}(j) $, define $ \varphi_s(j)=\varphi_{T_u-}(j) \cup S^i_m$ for every $ s\in [T_u,T_{u+1}) $.

For $ t\ge 0 $ and $ i\in\{ 1,\ldots,N \} $, let
$ C^i(t) = \{(T^i_m,S^i_m,(a^i_m,b^i_m),i):m\in\BB N\textnormal{ such that }T^i_m\le t\} $
be the projection of $C^i$ on the time interval $ [0,t] $.
For every $ j\in\{ 1,\ldots,N \} $, $ \varphi_t(j) $ is a deterministic function of the Poisson marks $  C(t)=\cup_{i\in\{ 1,\ldots,N \}}C^i(t) $. We emphasize this by writing $ \varphi_t(j)=\varphi_t(j)[C(t)] $.
Let $ R_t:[0,t]\to [0,t] $ be the reflection $ R_ts=t-s $.  Define also
$$ R_tC^i(t)=\{(R_ts,S,(a,b),i):(s,S,(a,b),i)\in C^i(t)\} $$ and
$$ R_tC(t)=\bigcup_{i\in\{ 1,\ldots,N \}}R_t C^i(t). $$
Finally, for every $ j\in\{ 1,\ldots,N \}$, the \textit{set of ancestors} $ \psi_t(j) $ is defined by  
$$\psi_t(j):= \varphi_t(j)[R_tC(t)]. $$
The process
$ \{\psi_t(j):t\ge 0\} $ is not Markovian,
and $ \psi_t(j) $ represents the set of indices 
of the particles that could have had influence in 
$ Y^N_t(j) $.
The clan of ancestors has been used before, for instance in \cite{AFG16,GJM}. We refer to those references for more details on this construction.

We now proceed with the proof of Lemma \ref{caramelo}.

\begin{proof}[Proof of Lemma \ref{caramelo}]
	Fix $ N $, $ t $, $ x $ and $ \ell $. 
	Expanding the $\ell$-th power of $F^N$, we have
	\begin{align}\nonumber
	F^N(t,x)^\ell
	= N^{-\ell}\sum_{\substack{i_1,\ldots,i_l\in\{ 1,\ldots,N \} \\ \textnormal{all different}}} \
	\prod_{u=1}^\ell \mathbbm{1}\{ Y^N_t(i_u)\le x \}
	+N^{-l}   \mbox{\ding{172}},
	\end{align}
	where 
	$ \mbox{\ding{172}} $ is the sum of all the $ \ell$-th factors with at least one repeated index.
	In $ \mbox{\ding{172}} $, there are $ N^\ell-N(N-1)\ldots (N-(\ell-1)) $ terms, each of which is bounded in absolute value by one, so
	\begin{align}\nonumber
	|N^{-\ell} \mbox{\ding{172}} | \le 1-\tfrac{N-1}{N}\tfrac{N-2}{N}\ldots \tfrac{N-(\ell-1)}{N}
	\eqqcolon a_{N,\ell}.
	\end{align}
	Analogously, for fixed $ \zeta\in [-\infty,\infty)^N $,
	\begin{align}\nonumber
	U_{\zeta}^N(t,x)^\ell
	=N^{-\ell}\sum_{\substack{i_1,\ldots,i_\ell\in\{ 1,\ldots,N \} \\ \textnormal{all different}}} \
	\prod_{u=1}^\ell \BB P_\zeta\big( Y^N_t(i_u)\le x \big)
	+N^{-\ell} \mbox{\ding{173}}
	\end{align}
	with $ |N^{-\ell} \mbox{\ding{173}}|\le a_{N,\ell} $.
	Then
	\begin{align}\nonumber
	&\big|\BB E_\zeta  [F^N(t,x)^\ell]  - U^N_{\zeta}(t,x)^\ell \big| \le 2a_{N,\ell}
	\\ \nonumber  & \quad + N^{-\ell}\sum_{\substack{i_1,\ldots,i_\ell\in\{ 1,\ldots,N \} \\ \textnormal{all different}}} \,
	\Big|
	\BB P_\zeta \Big( \bigcap_{u=1}^\ell  [Y^N_t(i_u)\le x ] \Big)
	-\prod_{u=1}^\ell  \BB P_\zeta\{ Y^N_t(i_u)\le x \}
	\Big|.
	\end{align}
	
	We next prove that, for distinct indices $ i_1,\ldots,i_\ell $,
	\begin{align}\label{camarote}
	\Big|
	\BB P_\zeta \Big( \bigcap_{u=1}^\ell  [Y^N_t(i_u)\le x ] \Big)
	-\prod_{u=1}^\ell  \BB P_\zeta( Y^N_t(i_u)\le x )
	\Big|
	\le
	\frac{k^2(e^{2\lambda k(k-1)t}-1)}{N-1}.
	\end{align}
	The last inequality together with the fact that $ a_{N,\ell}\sim CN^{-1} $  will allow us to conclude.
	(To see that $ a_{N,\ell}\sim CN^{-1} $ one can easily prove by induction that $Na_{N,\ell}\le \frac{\ell(\ell-1)}{2}$ for every $ \ell\ge 2 $.)
	
	Define the event
	\begin{align}\nonumber
	\CAL I[i_1,\ldots,i_\ell]
	=
	\bigcup_{\substack{m,n\in\{1,\ldots, \ell\} \\ m\neq n }}
	[\psi_t(i_m)\cap \psi_t(i_n)\neq \emptyset ],
	\end{align}
	namely the   complement of  $ \CAL I[i_1,\ldots,i_\ell]$ occurs when the clans of ancestors are pairwise disjoint. 
	On the one hand,
	\begin{align}\nonumber
	\BB P_\zeta \Big( \bigcap_{u=1}^\ell   [Y^N_t(i_u)\le x ] \Big)
	&= \BB P_\zeta
	\Big[  \Big( \bigcap_{u=1}^\ell   [Y^N_t(i_u)\le x ] \Big) \cap \CAL I(i_1,\ldots,i_\ell) \Big]
	\\
	\nonumber	&\blanco{=}+\sum_{A_1,\ldots,A_\ell }^* \BB P_\zeta
	\Big[
	\bigcap_{u=1}^\ell  [Y^N_t(i_u)\le x,\psi_t(i_u)=A_u] 
	\Big]
	\\ \nonumber
	&= \BB P_\zeta
	\Big[  \Big( \bigcap_{u=1}^\ell   [Y^N_t(i_u)\le x ] \Big) \cap \CAL I(i_1,\ldots,i_\ell) \Big]
	\\[5pt] \label{escape}
	&\blanco{=}+\sum_{A_1,\ldots,A_\ell }^* \,
	\prod_{u=1}^\ell  \,
	\BB P_\zeta
	[Y^N_t(i_u)\le x,\psi_t(i_u)=A_u].
	\end{align}
	The symbol $ \displaystyle\sum_{A_1,\ldots,A_\ell}^* $
	means that we are summing over subsets $ A_1,\ldots,A_\ell\subset\{ 1,\ldots,N \} $ that are pairwise disjoint and such that $ i_u\in A_u $ for every $ u\in\{1,\ldots,\ell\} $.
	In the last identity, 
	we used the factorization property of the clans of ancestors
	\begin{align}\nonumber
	\BB P_\zeta
	\Big[
	\bigcap_{u=1}^\ell  [Y^N_t(i_u)\le x,\psi_t(i_u)=A_u] 
	\Big]
	=
	\prod_{u=1}^\ell  \,
	\BB P_\zeta
	[Y^N_t(i_u)\le x,\psi_t(i_u)=A_u],
	\end{align}
	that holds because, for every $ u\in\{1,\ldots,l\} $,
	the event $ [Y^N_t(i_u)\le x,\psi_t(i_u)=A_u]  $
	is measurable with respect to the $ \sigma $-algebra generated by $\displaystyle \bigcup_{r\in A_u}\{ C^r,B(r) \} $.
	
	We now work with the second term inside the absolute value in \eqref{camarote},
	\[
	\prod_{u=1}^\ell  \BB P_\zeta( Y^N_t(i_u)\le x ).
	\]
	Consider $\ell$ independent copies $ \{ (B^{(u)},C^{(u)}):u\in\{1,\ldots,\ell\}\} $ of $ (B,C) $, and let $ Y^{N,(u)} $ be the process constructed as a function of $ (B^{(u)},C^{(u)}) $, all the copies with initial condition $ \zeta $.
	Similarly,
	let $ \psi^{(u)}=(\psi^{(u)}(1),\ldots,\psi^{(u)}(N)) $ be the process $ \psi $ constructed as a function of $ C^{(u)} $,
	and let
	\begin{align*}\nonumber
	\CAL I^\otimes[i_1,\ldots,i_\ell]
	=
	\bigcup_{\substack{m,n\in\{1,\ldots,\ell\} \\ m\neq n }}
	[\psi^{(m)}_t(i_m)\cap \psi^{(n)}_t(i_n)=\emptyset ].
	\end{align*}
	Then
	\begin{align}
	\nonumber	\prod_{u=1}^l \BB P_\zeta\big( Y^N_t(i_u)\le x \big)
	= \ & \BB P_\zeta
	\Big[
	\bigcap_{u=1}^\ell [Y^{N,(u)}_t(i_u)\le x]
	\Big]
	\\ \label{enter}
	= \ &
	\BB P_\zeta
	\Big[
	\Big(
	\bigcap_{u=1}^\ell [Y^{N,(u)}_t(i_u)\le x]
	\Big)
	\cap
	\CAL I^\otimes[i_1,\ldots,i_\ell]
	\Big]
	\\ \nonumber 
	& \hspace{-2pt} +
	\sum_{A_1,\ldots,A_\ell}^*
	\BB P_\zeta
	\Big[
	\bigcap_{u=1}^\ell \big(Y^{N,(u)}_t(i_u)\le x,
	\psi^{(u)}_t(i_u)=A_u
	\big)
	\Big].
	\end{align}
	Since \eqref{escape} and \eqref{enter} coincide,
	and since
	\begin{align}\nonumber
	\BB P
	( \CAL I[i_1,\ldots,i_\ell] )
	=
	\BB P
	(
	\CAL I^\otimes[i_1,\ldots,i_\ell]
	)
	\end{align}
	again by the factorization property of the clans of ancestors, 
	the left-hand side of \eqref{camarote} is bounded by
	$2\BB P
	( \CAL I[i_1,\ldots,i_\ell] )
	$.
	Inequality \eqref{camarote} has been reduced to proving that
	\begin{align*}
	\BB P
	( \CAL I[i_1,\ldots,i_\ell] )
	\le 
	\frac{\lambda k^3(e^{2\lambda k(k-1)t}-1)}{2(N-1)}.
	\end{align*}
	
	Since the growth rate of $ | \varphi_s(j) | $ is bounded from above by $ \lambda k(k-1) | \varphi_s(j) |$,
	we have
	\begin{align*}
	\BB E\big(|\varphi_s(j)|\big) \le e^{\lambda k(k-1)s},
	\end{align*}
	for every $ s\ge 0 $.
	We examine now the rate at which the indicator function of the event
	\begin{align}\nonumber
	\CAL J_s[i_1,\ldots,i_\ell]
	= \bigcup_{\substack{m,n\in\{1,\ldots,\ell\} \\ m\neq n }}
	[\varphi_s(i_m)\cap \varphi_s(i_n)=\emptyset ]
	\end{align}
	jumps from zero to one. If such a jump occurs at time $s$, then $ \varphi_{s-}(i_1),\ldots,\varphi_{s-}(i_\ell) $ are pairwise disjoint and there are $ m,n\in\{1,\ldots,\ell\} $, $ m\neq  n$,
	such that an index $ u\in A_m $ rings
	and the chosen $ (k-1) $-tuple contains some $ v\in A_n $.
	Under these considerations, we conclude that this rate is bounded from above by
	\begin{align}
	\nonumber &\sum_{A_1,\ldots,A_\ell}^* 
	\BB P[\varphi_s(i_1)=A_1,\ldots,\varphi_s(i_\ell)=A_\ell]
	\sum_{\substack{m,n\in\{1,\ldots,\ell\} \\ m\neq n  }} \,
	\sum_{u\in A_m, v\in A_n}\lambda k\frac{k-1}{N-1}
	\\ \label{mostaza}
	&\quad=
	\lambda k\frac{k-1}{N-1} \,
	\sum_{\substack{m,n\in\{1,\ldots,\ell\} \\ m\neq n  }} \,
	\sum_{A_1,\ldots,A_\ell}^* \,
	\sum_{u\in A_m, v\in A_n} \,
	\BB P\big[\varphi_s(i_1)=A_1,\ldots,\varphi_s(i_\ell)=A_\ell\big].
	\end{align}
	Fix a pair $ m,n\in\{ 1,\ldots,N \} $, $ m\neq n $. Without loss of generality, we assume $ m=1 $, $ n=2 $.
	We have
	\begin{align}\nonumber
	&\sum_{A_1,\ldots,A_\ell}^* \,
	\sum_{u\in A_1,v\in A_2}
	\BB P[\varphi_s(i_1)=A_1,\ldots,\varphi_s(i_\ell)=A_\ell]
	\\[5pt]\nonumber
	&\quad =
	\sum_{A_1,A_2}^* 
	|A_1||A_2|
	\BB P[\varphi_s(i_1)=A_1,\varphi_{i_2}(s)=A_2]
	\, \mbox{\ding{174}},
	\end{align}
	where
	\begin{align}\nonumber
	\mbox{\ding{174}} = \sum_{A_3,\ldots,A_\ell}^*
	\BB P[\varphi_s(i_3)=A_3,\ldots,\varphi_s(i_\ell)=A_\ell].
	\end{align}
	Using that $ \mbox{\ding{174}}\le 1 $, that
	\begin{align}\nonumber
	\sum_{A_1,A_2}^* 
	|A_1||A_2|
	\BB P[\varphi_s(i_1)=A_1,\varphi_s(i_2)=A_2]
	=\BB E\big(|\varphi_1(s)|\big)^2\le e^{2\lambda k(k-1)s}
	\end{align}
	and pluggin in \eqref{mostaza},
	we obtain that \eqref{mostaza} is bounded by
	$\lambda k\tfrac{k-1}{N-1}e^{2\lambda k(k-1)s}k^2$. 
	Finally, using that the distribution of the Poisson point process in $ [0,t] $ is invariant under the reflection $ R_t $,
	\begin{align}\nonumber
	\BB P
	\big( \CAL I[i_1,\ldots,i_\ell] \big)
	&=\BB P\big( \CAL J_t[i_1,\ldots,i_\ell]\big)
	=\int_0^t   \frac{\mathrm{d}}{\mathrm{d} s}  
	\BB E
	[\mathbbm{1}\{\CAL J_s[i_1,\ldots,i_\ell]\}]
	\mathrm{d} s
	\\[5pt] \nonumber
	&\le \int_0^t \lambda k  \tfrac{k-1}{N-1}e^{2\lambda k(k-1)s}k^2\mathrm{d} s
	=\lambda k\frac{k^2(e^{2\lambda k(k-1)t}-1)}{2(N-1)}.
	\qedhere
	\end{align}
\end{proof}

\bibliographystyle{plain}
\bibliography{bdbbm_biblio}

\end{document}